\def\benm{\begin{enumerate}}
\def\eenm{\end{enumerate}}
\def\bal{\begin{align}}
\def\eal{\end{align}}
\def\st{\stackrel{\tau}{\ast}}
\def\lst{\stackrel{\tau_l}{\ast}}
\def\rst{\stackrel{\tau_r}{\ast}}
\newtheorem{theorem}{Theorem}[section]
\theoremstyle{definition}
\newtheorem{proposition}[theorem]{Proposition}
\newtheorem{corollary}[theorem]{Corollary}
\theoremstyle{remark}
\newtheorem{remark}[theorem]{Remark}
\numberwithin{equation}{section}
\begin{document}

\title[A new approach to convolution and semi-direct products of groups]{A new approach to convolution and semi-direct products of groups}

\author[Arash Ghaani Farashahi]{Arash Ghaani Farashahi$^{*}$}
\address{$^1$ Department of Pure Mathematics, Faculty of Mathematical sciences, Ferdowsi University of
Mashhad (FUM), P. O. Box 1159, Mashhad 91775, Iran.}

\email{ghaanifarashahi.arash@stu-mail.um.ac.ir}
\email{ghaanifarashahi@hotmail.com}
\email{ghaanifarashahi@gmail.com}

\curraddr{}


\author{Rajabali Kamyabi-Gol}
\address{$^2$ Department of Pure Mathematics, Faculty of Mathematical sciences, Ferdowsi University of
Mashhad (FUM), P. O. Box 1159, Mashhad 91775, Iran.}
\address{Center of Excellence in Analysis on Algebraic Structures (CEAAS),
Ferdowsi University of Mashhad (FUM), P. O. Box 1159, Mashhad 91775, Iran.}
\email{kamyabi@ferdowsi.um.ac.ir}


\subjclass[2000]{Primary 44A35, 22A10}

\date{}


\keywords{semi-direct products of groups, left $\tau$-convolution ($\tau_l$-convolution), right $\tau$-convolution ($\tau_r$-convolution), $\tau$-convolution, $\tau$-involution, $\tau$-approximate identity, closed $\tau$-ideal.}
\thanks{$^*$Corresponding author}
\thanks{E-mail addresses: ghaanifarashahi@hotmail.com (A. Ghaani Farashahi). kamyabi@ferdowsi.um.ac.ir (R. Kamyabi-Gol)
}

\begin{abstract}
Let $H$ and $K$ be locally compact groups and $\tau:H\to Aut(K)$ be a continuous homomorphism and also let $G_\tau=H\ltimes_\tau K$ be the semi-direct product of $H$ and $K$ with respect to $\tau$. We define left and also right $\tau$-convolution on $L^1(G_\tau)$ such that $L^1(G_\tau)$ with respect to each of them is a Banach algebra. Also we define $\tau$-convolution as a linear combination of the left and right $\tau$-convolution. We show that the $\tau$-convolution is commutative if and only if $K$ is abelian and also when $H$ and $K$ are second countable groups, the $\tau$-convolution coincides with the standard convolution of $L^1(G_\tau)$ if and only if $H$ is the trivial group. We prove that there is a $\tau$-involution on $L^1(G_\tau)$ such that $L^1(G_\tau)$ with respect to the $\tau$-involution and $\tau$-convolution is a non-associative Banach $*$-algebra and also it is also shown that when $K$ is abelian, the $\tau$-involution and $\tau$-convolution makes $L^1(G_\tau)$ into a Jordan Banach $*$-algebra.
\end{abstract}

\maketitle

\section{\bf{Introduction}}

In mathematical analysis and in particular from functional analysis viewpoint, convolution on Euclidean space is a linear map on the function spaces related to Euclidean spaces, for more details see \cite{FollR}, also convolution is similar to cross-correlation.
In classical harmonic analysis convolution is an integral which interpret the quantity of overlap of a function as it is shifted over another function, see \cite{FollH} or \cite{HR1}. It has applications that include statistics methods, image and signal processing, electrical engineering, and also differential equations.
The convolution can be defined for functions on groups other than Euclidean space, see \cite{FollH} or \cite{HR1}. In particular, the circular convolution can be defined for periodic functions (that is, functions on the circle group), and the discrete convolution can be defined for functions on the set of integers. On the other hand one of the most important concepts in Fourier theory, Gabor theory, Wavelet theory, Multiresolution analysis and also in crystallography, is that of a convolution.

Many non-abelian groups in harmonic analysis can be considered as a semi-direct products of groups.
In this paper we want to introduce a new approach to the convolution on the semi-direct products of groups and also we study the basic properties of this theory.
The principal role played by convolutions in classical harmonic analysis is in evidence throughout \cite{BH},\cite{JEW},\cite{50},\cite{Zyg}.
Since convolution plays an important role in general theory of harmonic analysis, we focuss on the convolution theory and we define a convolution on the functions spaces related to semidirect products of groups.
More precisely many classes of non-abelian groups can be written as a semidirect product of groups in which one of the groups is abelian. In this case the semi-direct product group is non-abelian and so the standard convolution is noncommutative and so there is a lack to study the harmonic analysis on semi-direct products of groups. Thus it maybe worthwhile if we define a new convolution such that it's commutativity behavior depends only on one of the groups. In fact we are looking for a convolution on these classes of groups in which allow us to studying and analyzing the convolution with respect to the second group in the semi-direct product.

Throughout this article which contains 4 section we assume that $H$ and $K$ are locally compact topological groups and $\tau:H\to Aut(K)$ is a continuous homomorphism and $G_\tau=H\ltimes_\tau K$ is the semi-direct product of $H$ and $K$ with respect to $\tau$. Section 2 is devoted to fix notations and also a brief summary on the classic properties of the semi-direct products of groups. In section 3 first we define left and right $\tau$-convolution on $L^1(G_\tau)$ which makes $L^1(G_\tau)$ into a Banach algebra. Then we define the $\tau$-convolution as a linear combination of left and right convolution and also we define a $\tau$-involution on $L^1(G_\tau)$ and we show that $L^1(G_\tau)$ is a non-associative Banach $*$-algebra with respect to the $\tau$-convolution and the $\tau$-involution. We prove that the $\tau$-convolution is commutative if and only if $K$ is abelian.  We also show that this new algebra structure is completely different from the standard algebra on the $L^1$-function algebra related to each locally compact group.
Recall that our main idea on this theory is that when $H$ is the trivial group we get the classical convolution on the locally compact group $K$. Also we prove that the $\tau$-convolution coincides with the standard convolution of $L^1(G_\tau)$ if and only if $H$ is the trivial group.
Finally, in section 4 as an application for $p>1$ we make $L^p(G_\tau)$ into a left Banach $L^1_{\tau_l}(G_\tau)$-module.

\section{\bf{Preliminaries and notations}}

A non-associative algebra $\mathcal{B}$ is a linear space $\mathcal{B}$ over field of complex or real numbers endowed with a bilinear map $(x,y)\mapsto xy$ from $\mathcal{B}\times\mathcal{B}$ into $\mathcal{B}$ and also a Jordan algebra is a commutative non-associative algebra $\mathcal{B}$ whose product satisfies the Jordan identity $(xy)x^2=x(yx^2)$ for all $x,y\in \mathcal{B}$.
Note that the term non-associative stands for not necessarily associative. More precisely we use the term non-associative algebra in order to emphasize that the associativity of the product is not being assumed.
A non-associative Banach algebra is a non-associative algebra $\mathcal{B}$ over the field of complex or real numbers, whose underlying linear space is a Banach space with respect to a norm $\|.\|$ satisfying $\|xy\|\le\|x\|\|y\|$ for all $x,y\in \mathcal{B}$.

Let $X$ be locally compact Hausdorff space. By $\mathcal{C}_c(X)$ we mean the space of all continuous complex valued functions on $X$ with compact supports. If $\mu$ is a positive Radon measure on $X$, for each $1\le p<\infty$ the Banach space of equivalence classes of $\mu$-measurable complex valued functions $f:X\to\mathbb{C}$ such that
$$\|f\|_p=\left(\int_X|f(x)|^pd\mu(x)\right)^{1/p}<\infty$$
is denoted by $L^p(X,\mu)$ which contains dense subspace $\mathcal{C}_c(X)$.
When $G$ is a locally compact group with left Haar measure $dx$ and modular function $\Delta_G$
for each $p\ge 1$ we mean by $L^p(G)$ the Banach space $L^p(G,dx)$. If $p=1$ standard convolution of $f,g\in L^1(G)$ defined via
\begin{equation}\label{0.0}
f\ast g(x)=\int_Gf(y)g(y^{-1}x)dy
\end{equation}
and also the standard involution of $f\in L^1(G)$ is given by $f^*(x)=\Delta_G(x^{-1})\overline{f(x)}$
which makes $L^1(G)$ into a Banach $*$-algebra. We recall that the Banach $*$-algebra $L^1(G)$ with respect to the standard convolution given in (\ref{0.0}) has an approximate identity (see Proposition 2.42 of \cite{FollH}) and also the Banach $*$-algebra $L^1(G)$ is commutative if and only if $G$ is abelian.

Let $H$ and $K$ be locally compact groups with identity elements $e_H$ and $e_K$ respectively and left Haar measures $dh$ and $dk$ respectively and also let $\tau:H\to Aut(K)$ be a homomorphism such that the map $(h,k)\mapsto \tau_h(k)$ is continuous from $H\times K$ to $K$. In this case we say the homomorphism $\tau:H\to Aut(K)$ is continuous.
The semidirect product $G_\tau=H\ltimes_\tau K$ is a locally compact topological group with underlying set $H\times K$ which is equipped with product topology and group operation is defined by
\begin{equation}\label{}
(h,k)\ltimes_\tau(h',k')=(hh',k\tau_h(k'))\hspace{0.5cm}{\rm and}\hspace{0.5cm}(h,k)^{-1}=(h^{-1},\tau_{h^{-1}}(k^{-1})).
\end{equation}
The left Haar measure of $G_\tau$ is $d\mu_{G_\tau}(h,k)=\delta(h)dhdk$ and the modular function of $G_\tau$ is $\Delta_{G_\tau}(h,k)=\delta(h)\Delta_H(h)\Delta_K(k)$, where the positive continuous homomorphism $\delta:H\to(0,\infty)$ is given by
$dk=\delta(h)d(\tau_h(k))$ and also $\Delta_H$ and $\Delta_K$ are modular functions of locally compact groups $H$ and $K$ respectively, for more details see Theorem 15.29 of \cite{HR1}.

\section{\bf{$\tau$-convolution and $\tau$-involution}}

In the following we define a $\tau$-convolution and a $\tau$-involution on $L^1(G_\tau)$ which is different from the usual convolution and involution of $L^1(G_\tau)$. We note that our idea on this extension is that only when $H$ is the trivial group this algebra structure coincides with the standard algebra structure of $L^1(K)$.
For $f\in L^1(G_\tau)$, let $\widetilde{f}\in L^1(K)$ be given by
\begin{equation}\label{9}
\widetilde{f}(k):=\int_Hf_{t}(k)\delta(t)dt,
\end{equation}
where for each $t\in H$ the function $f_t$ is defined for a.e. $k$ in $K$ via $f_t(k)=f(t,k)$. Then the integral defined in (\ref{9}) converges. In fact we have
\begin{align*}
\|\widetilde{f}\|_{L^1(K)}
&=\int_K|\widetilde{f}(k)|dk
\\&\le\int_K\left(\int_H|f(t,k)|\delta(t)dt\right)dk=\|f\|_{L^1(G_\tau)}.
\end{align*}
For $f,g\in L^1(G_\tau)$, we define the right $\tau$-convolution on $L^1(G_\tau)$ by
\begin{equation}\label{10.r}
f\rst g(h,k):=\int_Hf_{h}\ast g_{t}(k)\delta(t)dt.
\end{equation}
where $f_h\ast g_t$ is the standard convolution on $L^1(K)$. We recall that according to the Fubini-Toneli Theorem, for each $f\in L^1(G_\tau)$ we have $f_h\in L^1(K)$ for a.e. $h$ in $H$. The integral defined in (\ref{10.r}) converges and also for each $f,g\in L^1(G_\tau)$ we have $\|f\rst g\|_{L^1(G_\tau)}\le\|f\|_{L^1(G_\tau)}\|g\|_{L^1(G_\tau)}$. Indeed by using Fubini's Theorem and Proposition 2.39 of \cite{FollH} we get
\begin{align*}
\|f\rst g\|_{L^1(G_\tau)}
&=\int_H\int_K\left|\int_Hf_{h}\ast g_{t}(k)\delta(t)dt\right|\delta(h)dkdh
\\&\le\int_H\int_K\int_H|f_{h}\ast g_{t}(k)|\delta(h)\delta(t)dtdkdh
\\&\le\int_H\int_H\left(\int_K|f_{h}\ast g_{t}(k)|dk\right)\delta(h)\delta(t)dtdh
\\&=\int_H\int_H\|f_h\ast g_t\|_{L^1(K)}\delta(h)\delta(t)dtdh
\\&\le\int_H\int_H\|f_h\|_{L^1(K)}\|g_t\|_{L^1(K)}\delta(h)\delta(t)dtdh=\|f\|_{L^1(G_\tau)}\|g\|_{L^1(G_\tau)}.
\end{align*}
The right $\tau$-convolution for a.e $(h,k)\in G_\tau$ satisfies $f\rst g(h,k)=f_h\ast\widetilde{g}(k)$.
This is because for a.e. $(h,k)\in G_\tau$ we have
\begin{align*}
f\rst g(h,k)
&=\int_Hf_h\ast g_t(k)\delta(t)dt
\\&=\int_H\left(\int_Kf(h,s)g(t,s^{-1}k)ds\right)\delta(t)dt
\\&=\int_Kf(h,s)\left(\int_Hg(t,s^{-1}k)\delta(t)dt\right)ds=f_h\ast\widetilde{g}(k).
\end{align*}
Similarly the left $\tau$-convolution on $L^1(G_\tau)$ is defined by
\begin{equation}\label{10.l}
f\lst g(h,k):=\int_Hf_{t}\ast g_{h}(k)\delta(t)dt,
\end{equation}
which analogy for a.e. $(h,k)\in G_\tau$ satisfies $f\lst g(h,k)=\widetilde{f}\ast g_h(k)$ and also
$\|f\lst g\|_{L^1(G_\tau)}\le\|f\|_{L^1(G_\tau)}\|g\|_{L^1(G_\tau)}.$
Now the $\tau$-convolution of $f,g\in L^1(G_\tau)$ is defined as
\begin{equation}\label{11}
f\st g=2^{-1}\left(f\rst g+f\lst g\right).
\end{equation}
Then clearly $f\ast g\in L^1(G_\tau)$. More precisely we have
\begin{align*}\label{12}
\|f\st g\|_{L^1(G_\tau)}
&=2^{-1}\|f\rst g+f\lst g\|_{L^1(G_\tau)}
\\&\le2^{-1}\left(\|f\rst g\|_{L^1(G_\tau)}+\|f\lst g\|_{L^1(G_\tau)}\right)\le \|f\|_{L^1(G_\tau)}\|g\|_{L^1(G_\tau)}.
\end{align*}
Note that the $\tau$-convolution of $f,g\in L^1_\tau(G_\tau)$ defined in (\ref{11}), for a.e. $(h,k)\in G_\tau$ can be rewritten in the following form
\begin{equation}\label{13}
f\st g(h,k)=2^{-1}\left(f_h\ast\widetilde{g}(k)+\widetilde{f}\ast g_h(k)\right).
\end{equation}
\begin{theorem}\label{11.1}
Let $\tau:H\to Aut(K)$ be a continuous homomorphism and $G_\tau=H\ltimes_\tau K$.
The right $\tau$-convolution defined in (\ref{10.r}) makes $L^1(G_\tau)$ into a Banach algebra.
\end{theorem}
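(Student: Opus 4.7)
The plan is to observe that the preceding text already establishes the two non-trivial analytic facts required: well-definedness of $f\rst g$ as an element of $L^1(G_\tau)$ via Fubini, and the submultiplicative estimate $\|f\rst g\|_{L^1(G_\tau)}\le\|f\|_{L^1(G_\tau)}\|g\|_{L^1(G_\tau)}$. Bilinearity of $(f,g)\mapsto f\rst g$ is immediate from linearity of the standard convolution on $L^1(K)$ and linearity of integration over $H$. Hence the one remaining point is associativity of $\rst$, which I would prove using the clean pointwise identity
\[
f\rst g(h,k)=f_h\ast \widetilde{g}(k),\qquad\text{a.e.}\ (h,k)\in G_\tau,
\]
already derived in the excerpt.

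The main step is a small auxiliary identity: for all $g,\phi\in L^1(G_\tau)$,
\[
\widetilde{g\rst\phi}=\widetilde{g}\ast\widetilde{\phi}\qquad\text{in}\ L^1(K).
\]
I would prove this by unwinding definitions and applying Fubini's theorem, which is justified exactly as in the norm estimate. Concretely, using $(g\rst\phi)_t=g_t\ast\widetilde{\phi}$ one obtains
\[
\widetilde{g\rst\phi}(k)=\int_H\bigl(g_t\ast\widetilde{\phi}\bigr)(k)\,\delta(t)\,dt=\int_K\!\left(\int_H g(t,s)\delta(t)\,dt\right)\widetilde{\phi}(s^{-1}k)\,ds=\widetilde{g}\ast\widetilde{\phi}(k).
\]

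Given this identity, associativity is a two-line calculation using associativity of the standard convolution on $L^1(K)$: for $f,g,\phi\in L^1(G_\tau)$ and a.e.\ $(h,k)\in G_\tau$,
\[
(f\rst g)\rst\phi(h,k)=(f\rst g)_h\ast\widetilde{\phi}(k)=(f_h\ast\widetilde{g})\ast\widetilde{\phi}(k)=f_h\ast(\widetilde{g}\ast\widetilde{\phi})(k),
\]
while
\[
f\rst(g\rst\phi)(h,k)=f_h\ast\widetilde{g\rst\phi}(k)=f_h\ast(\widetilde{g}\ast\widetilde{\phi})(k),
\]
so the two sides agree a.e.

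The only real obstacle is the Fubini interchange used to prove $\widetilde{g\rst\phi}=\widetilde{g}\ast\widetilde{\phi}$; this is not difficult because absolute integrability of the triple integral in $(t,s,k)$ (or $(t,s)$ after fixing $k$ a.e.) is precisely what the displayed bound on $\|f\rst g\|_{L^1(G_\tau)}$ in the excerpt already verifies. Everything else reduces to bookkeeping and invocation of associativity of ordinary convolution on $L^1(K)$.
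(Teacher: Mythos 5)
Your proposal is correct, and the analytic ingredients (Fubini plus associativity of the ordinary convolution on $L^1(K)$) are the same ones the paper uses, but you organize them differently. The paper proves associativity by a single inline chain: it starts from $(f\rst g)\rst u(h,k)=(f\rst g)_h\ast\widetilde{u}(k)$, expands the definition, uses Fubini to bring the $t$-integral outside, applies associativity in the form $(f_h\ast g_t)\ast\widetilde{u}=f_h\ast(g_t\ast\widetilde{u})$, and then recognizes $g_t\ast\widetilde{u}=(g\rst u)_t$ to reassemble $f\rst(g\rst u)(h,k)=\int_H f_h\ast(g\rst u)_t(k)\,\delta(t)\,dt$; it never states the identity $\widetilde{g\rst u}=\widetilde{g}\ast\widetilde{u}$ at this stage. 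You instead isolate precisely that identity as a lemma (it is, in effect, the multiplicativity of $f\mapsto\widetilde{f}$, which the paper only proves later, in Theorem \ref{31}), after which both triple products collapse to $f_h\ast(\widetilde{g}\ast\widetilde{u})$ in two lines. Your decomposition makes the mechanism transparent --- the right $\tau$-convolution sees its second argument only through $\widetilde{g}$ --- and the lemma is reused later in the paper anyway; the paper's version avoids introducing an auxiliary statement but is essentially the same computation carried out with the $t$-integral kept explicit. Your appeal to the displayed norm bound to justify the Fubini interchange, and your remark that bilinearity and the submultiplicative estimate were already established before the theorem, are both accurate, so nothing is missing.
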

\begin{proof}
Let $f,g,u$ in $L^1(G_\tau)$. Using the associativity of the standard convolution on $L^1(K)$ for a.e. $(h,k)\in G_\tau$ we have
\begin{align*}
(f\rst g)\rst u(h,k)
&=(f\rst g)_h\ast\widetilde{u}(k)
\\&=\int_K(f\rst g)_h(s)\widetilde{u}(s^{-1}k)ds
\\&=\int_K\left(\int_Hf_h\ast g_t(s)\delta(t)dt\right)\widetilde{u}(s^{-1}k)ds
\\&=\int_H\left(\int_Kf_h\ast g_t(s)\widetilde{u}(s^{-1}k)ds\right)\delta(t)dt
\\&=\int_H(f_h\ast g_t)\ast\widetilde{u}(k)\delta(t)dt
\\&=\int_Hf_h\ast(g_t\ast\widetilde{u})(k)\delta(t)dt
\\&=\int_H\left(\int_Kf_h(s)(g_t\ast\widetilde{u})(s^{-1}k)ds\right)\delta(t)dt
\\&=\int_H\left(\int_Kf_h(s)(g\rst u)_t(s^{-1}k)ds\right)\delta(t)dt
\\&=\int_Hf_h\ast(g\rst u)_t(k)\delta(t)dt
=f\rst(g\rst u)(h,k).
\end{align*}
\end{proof}
Next theorem shows the same result as in Theorem \ref{11.1} for the left $\tau$-convolution.
\begin{theorem}\label{11.1.2}
Let $\tau:H\to Aut(K)$ be a continuous homomorphism and $G_\tau=H\ltimes_\tau K$.
The left $\tau$-convolution defined in (\ref{10.l}) makes $L^1(G_\tau)$ into a Banach algebra.
\end{theorem}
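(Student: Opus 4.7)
The plan is to mirror the argument already used in Theorem~\ref{11.1}, with the role of the identity $f\rst g(h,k)=f_h\ast\widetilde{g}(k)$ replaced by the analogous left-sided identity $f\lst g(h,k)=\widetilde{f}\ast g_h(k)$. The submultiplicativity $\|f\lst g\|_{L^1(G_\tau)}\le \|f\|_{L^1(G_\tau)}\|g\|_{L^1(G_\tau)}$ has already been recorded in the preamble to the theorem, and bilinearity is automatic, so the only content to verify is associativity of $\lst$.

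First I would establish the auxiliary identity $\widetilde{f\lst g}=\widetilde{f}\ast\widetilde{g}$ in $L^1(K)$ for all $f,g\in L^1(G_\tau)$. Unwinding the definition of the tilde operation and of $\lst$, then swapping the order of integration by Fubini (justified by exactly the same domination estimate that gave the submultiplicativity bound), one gets
\begin{align*}
\widetilde{f\lst g}(k)
&=\int_H(f\lst g)(t,k)\delta(t)dt
=\int_H\widetilde{f}\ast g_t(k)\delta(t)dt
\\&=\int_K\widetilde{f}(s)\left(\int_Hg(t,s^{-1}k)\delta(t)dt\right)ds
=\widetilde{f}\ast\widetilde{g}(k)
\end{align*}
for a.e.\ $k\in K$.

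With this identity in hand the associativity reduces to one line. Applying $f\lst g(h,k)=\widetilde{f}\ast g_h(k)$ twice,
\begin{align*}
(f\lst g)\lst u(h,k)&=\widetilde{f\lst g}\ast u_h(k)=(\widetilde{f}\ast\widetilde{g})\ast u_h(k),\\
f\lst(g\lst u)(h,k)&=\widetilde{f}\ast(g\lst u)_h(k)=\widetilde{f}\ast(\widetilde{g}\ast u_h)(k),
\end{align*}
and the two right-hand sides agree by associativity of the standard convolution on $L^1(K)$.

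The only genuinely nontrivial step is the Fubini swap in the computation of $\widetilde{f\lst g}$; everything else is formal manipulation using the closed-form description of $\lst$. Since the dominated rearrangement required is identical in structure to the one used in the preamble to bound $\|f\lst g\|_{L^1(G_\tau)}$, this obstacle is essentially already handled, and the proof should be short.
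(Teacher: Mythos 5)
Your proof is correct and in substance the same as the paper's: both arguments rest on the closed-form identity $f\lst g(h,k)=\widetilde{f}\ast g_h(k)$, a Fubini interchange, and associativity of the ordinary convolution on $L^1(K)$. The only organizational difference is that you first isolate the identity $\widetilde{f\lst g}=\widetilde{f}\ast\widetilde{g}$ --- which is precisely the homomorphism property of $f\mapsto\widetilde{f}$ that the paper establishes later in Theorem \ref{31} --- and then get associativity in one line, whereas the paper's own proof keeps the $H$-integral in place and substitutes $(f\lst g)_t=\widetilde{f}\ast g_t$ under the integral sign, performing the same Fubini swap implicitly at the last step. Your packaging gives a reusable lemma (and anticipates Theorem \ref{31}); the one point worth making explicit is that $f\lst g\in L^1(G_\tau)$ (from the norm estimate already recorded) before you apply the closed-form identity to it, which you have available, so both routes are equally rigorous.
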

\begin{proof}
We show that the left $\tau$-convolution is associative. Let $f,g,u\in L^1(G_\tau)$. Then, for a.e. $(h,k)\in G_\tau$ we have
\begin{align*}
(f\lst g)\lst u(h,k)
&=\int_H(f\lst g)_t\ast u_h(k)\delta(t)dt
\\&=\int_H(\widetilde{f}\ast g_t)\ast u_h(k)\delta(t)dt
\\&=\int_H\widetilde{f}\ast(g_t\ast u_h)(k)\delta(t)dt
=f\lst(g\lst u)(h,k).
\end{align*}
\end{proof}
But the following proposition guarantees that the $\tau$-convolution is not associative in general.
\begin{proposition}\label{11.2}
{\it The $\tau$-convolution defined in (\ref{11}), for each $f,g,u\in L^1(G_\tau)$ satisfies
\begin{equation}\label{11.2.1}
(f\st g)\st u-f\st (g\st u)=f\rst g\rst u-f\lst g\lst u.
\end{equation}
}\end{proposition}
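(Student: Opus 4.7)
The plan is to reduce (\ref{11.2.1}) to a purely pointwise identity on $h$-slices in $L^1(K)$, using the associativity of the ordinary convolution on $L^1(K)$ together with the slice formulas already established for $\rst$ and $\lst$. Since the only hypothesis left to exploit is the definition (\ref{11}), I will simply expand $(f\st g)\st u$ and $f\st(g\st u)$ by bilinearity, isolate the eight resulting triple products $\{(f\rst g)\rst u,\ (f\lst g)\rst u,\ (f\rst g)\lst u,\ (f\lst g)\lst u,\ f\rst(g\rst u),\ f\rst(g\lst u),\ f\lst(g\rst u),\ f\lst(g\lst u)\}$, and show that they collapse pairwise except for the two terms $f\rst g\rst u$ and $f\lst g\lst u$ appearing on the right-hand side.

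The engine of the collapse is the following bookkeeping lemma, which I would verify first: for any $F,G\in L^1(G_\tau)$ and for a.e.\ $h\in H$,
\begin{align*}
(F\rst G)_h &= F_h\ast \widetilde{G}, & (F\lst G)_h &= \widetilde{F}\ast G_h,\\
\widetilde{F\rst G} &= \widetilde{F}\ast\widetilde{G}, & \widetilde{F\lst G} &= \widetilde{F}\ast\widetilde{G}.
\end{align*}
The first two are exactly the slice formulas recorded just after (\ref{10.r}) and (\ref{10.l}); the last two follow by interchanging the $\int_H\cdots\delta(t)\,dt$ integral with the $K$-convolution, justified by Fubini exactly as in the norm estimate preceding Theorem \ref{11.1}. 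Once these are in hand, each of the eight triple products can be evaluated slice-by-slice as a double convolution in $L^1(K)$ of three functions drawn from $\{f_h,\widetilde f,g_h,\widetilde g,u_h,\widetilde u\}$. Associativity of $\ast$ on $L^1(K)$ then gives for example $((f\lst g)\rst u)_h = \widetilde f\ast g_h\ast\widetilde u = (f\lst(g\rst u))_h$ and $((f\rst g)\lst u)_h = \widetilde f\ast\widetilde g\ast u_h = (f\lst g\lst u)_h = (f\lst(g\lst u))_h$, while the ``purely right'' and ``purely left'' iterations are already absorbed into $f\rst g\rst u$ and $f\lst g\lst u$ by Theorems \ref{11.1} and \ref{11.1.2}.

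With these identifications, the expansion of $(f\st g)\st u - f\st(g\st u)$ coming from (\ref{11}) becomes a linear combination of just the two unambiguous triple products $f\rst g\rst u$ and $f\lst g\lst u$; collecting coefficients from the $\tfrac14$-expansion yields (\ref{11.2.1}). The main obstacle, if any, is the third step of the bookkeeping lemma, i.e.\ proving $\widetilde{F\rst G}=\widetilde{F}\ast\widetilde{G}$: one must swap a $\delta(t)\,dt$ integration past a $K$-convolution, so I would write $\widetilde{F\rst G}(k)=\int_H\!\int_H F_s\ast G_t(k)\,\delta(s)\delta(t)\,ds\,dt$ and apply Fubini (legal by the integrability already verified in the proof that $F\rst G\in L^1(G_\tau)$) before pulling the $s$-integral inside the convolution against $G_t$. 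After that, the proof is a mechanical expansion.
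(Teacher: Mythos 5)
Your route is essentially the paper's own: expand $(f\st g)\st u$ and $f\st(g\st u)$ bilinearly via (\ref{11}) into eight triple products and identify them through the slice formulas $(F\rst G)_h=F_h\ast\widetilde{G}$, $(F\lst G)_h=\widetilde{F}\ast G_h$, $\widetilde{F\rst G}=\widetilde{F\lst G}=\widetilde{F}\ast\widetilde{G}$ together with associativity of $\ast$ on $L^1(K)$; these identifications are exactly the paper's displays (\ref{11.2.a}) and (\ref{11.2.b}), and your Fubini justification of the third slice formula is the computation the paper performs later in Theorem \ref{31}. Up to that point your bookkeeping is correct.

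The gap is the last sentence, ``collecting coefficients from the $\tfrac14$-expansion yields (\ref{11.2.1})'': that collection, carried out, does not give (\ref{11.2.1}). Write $A=f\rst g\rst u$, $B=f\lst g\lst u$, $C=(f\lst g)\rst u=f\lst(g\rst u)$. Your own identifications give $(f\rst g)\rst u=A$, $(f\rst g)\lst u=(f\lst g)\lst u=B$, $(f\lst g)\rst u=C$, and $f\rst(g\rst u)=f\rst(g\lst u)=A$, $f\lst(g\rst u)=C$, $f\lst(g\lst u)=B$, so that
\[
(f\st g)\st u=2^{-2}\bigl(A+2B+C\bigr),\qquad f\st(g\st u)=2^{-2}\bigl(2A+B+C\bigr),
\]
and hence
\[
(f\st g)\st u-f\st(g\st u)=2^{-2}\bigl(f\lst g\lst u-f\rst g\rst u\bigr),
\]
which differs from the right-hand side of (\ref{11.2.1}) by a sign and a factor of $4$. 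The two expressions agree only when $f\rst g\rst u=f\lst g\lst u$, which fails in general: on the $h$-slice, $(f\rst g\rst u)(h,\cdot)=f_h\ast\widetilde{g}\ast\widetilde{u}$ while $(f\lst g\lst u)(h,\cdot)=\widetilde{f}\ast\widetilde{g}\ast u_h$, and choosing $f$, $u$ nonnegative, not a.e.\ zero, supported in $U_1\times K$ and $U_2\times K$ for disjoint nonempty open sets $U_1,U_2\subseteq H$ (possible whenever $H$ is nontrivial) puts the two triple products on essentially disjoint $H$-slices. So a careful version of your argument proves the corrected identity displayed above, not (\ref{11.2.1}) as printed. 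You are in good company: the paper's proof ends with the same unexecuted ``straightforward calculation'' (and its expansion of $(f\st g)\st u$ even repeats $(f\rst g)\rst u$ where $(f\lst g)\lst u$ should appear), so the statement itself needs the factor $-2^{-2}$ on its right-hand side; the qualitative conclusion — that $\st$ fails to be associative exactly to the extent that $f\rst g\rst u\ne f\lst g\lst u$ — survives, but you should state and prove the identity with the correct constant rather than assert that the coefficients collapse to $\pm1$.
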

\begin{proof}
Let $f,g,u\in L^1(G_\tau)$. Using Theorem \ref{11.1} and also Theorem \ref{11.1.2}, left and right $\tau$-convolutions are associative.
Thus it can be easily checked that for a.e. $(h,k)\in G_\tau$ we have
\begin{equation}\label{11.2.a}
f\rst (g\lst u)(h,k)=f\rst(g\rst u)(h,k)=(f\rst g)\rst u(h,k),
\end{equation}
\begin{equation}\label{11.2.b}
(f\rst g)\lst u(h,k)=f\lst (g\lst u)(h,k)=(f\lst g)\lst u(h,k).
\end{equation}
Also, using definition of the $\tau$-convolution and also (\ref{11.2.a}) and (\ref{11.2.b}) we have
\begin{align*}
(f\st g)\st u
&=2^{-1}\left((f\st g)\rst u+(f\st g)\lst u\right)
\\&=2^{-2}\left((f\rst g+f\lst g)\rst u+(f\rst g+f\lst g)\lst u\right)
\\&=2^{-2}\left((f\rst g)\rst u+(f\lst g)\rst u+(f\rst g)\lst u+(f\rst g)\rst u\right),
\end{align*}
and also similarly
\begin{align*}
f\st (g\st u)
&=2^{-1}\left(f\rst (g\st u)+f\lst (g\st u)\right)
\\&=2^{-2}\left(f\rst (g\rst u+g\lst u)+f\lst (g\rst u+g\lst u)\right)
\\&=2^{-2}\left(f\rst (g\rst u)+f\rst (g\lst u)+f\lst (g\rst u)+f\lst (g\lst u)\right).
\end{align*}
Now a straight forward calculation implies (\ref{11.2.1}).
\end{proof}
For $f\in L^1(G_\tau)$ let the $\tau$-involution be denoted by $f^{*^\tau}$ and defined for a.e. $(h,k)\in G_\tau$ by
\begin{equation}\label{14}
f^{*^\tau}(h,k):=(f_h)^*(k),
\end{equation}
where $(f_h)^*$ is the standard involution of $f_h$ in $L^1(K)$ given by $$(f_h)^*(k)=\Delta_K(k^{-1})\overline{f_h(k^{-1})},$$
which satisfies $\|(f_h)^*\|_{L^1(K)}=\|f_h\|_{L^1(K)}$ and $(f_h)^{*^*}=f_h$. Then, we have $f^{*^\tau}\in L^1(G_\tau)$. More precisely the linear map $*^\tau:L^1(G_\tau)\to L^1(G_\tau)$ is an isometry, because
\begin{align*}
\|f^{*^\tau}\|_{L^1(G_\tau)}&=\int_H\int_K|f^{*^\tau}(h,k)|\delta(h)dkdh
\\&=\int_H\left(\int_K|f_h^*(k)|dk\right)\delta(h)dh
\\&=\int_H\|f_h\|_{L^1(K)}\delta(h)dh=\|f\|_{L^1(G_\tau)}.
\end{align*}
Thus we prove the following theorem.
\begin{theorem}\label{11.3}
Let $\tau:H\to Aut(K)$ be a continuous homomorphism and $G_\tau=H\ltimes_\tau K$. $L^1(G_\tau)$ is a non-associative Banach $*$-algebra with respect to the $\tau$-convolution defined in (\ref{11}) and also the $\tau$-involution defined in (\ref{14}).
\end{theorem}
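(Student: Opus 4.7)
The plan is to verify all axioms of a non-associative Banach $*$-algebra: bilinearity and submultiplicativity of $\st$, conjugate-linearity, isometry, and involutivity of $*^\tau$, plus the key identity $(f\st g)^{*^\tau}=g^{*^\tau}\st f^{*^\tau}$. The submultiplicativity $\|f\st g\|_{L^1(G_\tau)}\le\|f\|_{L^1(G_\tau)}\|g\|_{L^1(G_\tau)}$ is already established in the discussion preceding the theorem, and the isometry of $*^\tau$ was verified there as well. Bilinearity of $\st$ follows at once from bilinearity of $\rst$ and $\lst$, which in turn follow from bilinearity of the ordinary convolution on $L^1(K)$ and of the map $f\mapsto\widetilde{f}$. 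Non-associativity (in the sense that associativity is not required) is documented by Proposition \ref{11.2}, which shows that the defect $(f\st g)\st u-f\st(g\st u)$ equals $f\rst g\rst u-f\lst g\lst u$ and is therefore generically nonzero.

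Next I would check that $*^\tau$ is conjugate-linear and involutive. Both reduce pointwise in $h$ to the corresponding statements for the standard involution on $L^1(K)$: for a.e.\ $(h,k)$ we have $(f^{*^\tau})^{*^\tau}(h,k)=((f_h)^*)^*(k)=f_h(k)=f(h,k)$, and conjugate-linearity is transparent from the formula $(f_h)^*(k)=\Delta_K(k^{-1})\overline{f_h(k^{-1})}$.

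The main obstacle, and the real content of the theorem, is the identity $(f\st g)^{*^\tau}=g^{*^\tau}\st f^{*^\tau}$. My plan is to establish two compatibility lemmas first. Lemma A: $(g^{*^\tau})_h=(g_h)^*$ for a.e.\ $h$, which is immediate from the definition in (\ref{14}). Lemma B: $\widetilde{f^{*^\tau}}=(\widetilde{f})^{*}$ in $L^1(K)$. For Lemma B, unwind both sides using (\ref{9}): the right side equals $\Delta_K(k^{-1})\overline{\int_H f_t(k^{-1})\delta(t)dt}$, and since $\delta(t)$ is real and positive, one can push the complex conjugate and the factor $\Delta_K(k^{-1})$ through the integral (justified by Fubini together with the bound $|\widetilde{f}(k)|\le\int_H|f(t,k)|\delta(t)dt$ already used) to obtain $\int_H f_t^{*}(k)\delta(t)dt$, which is exactly the left side.

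With Lemmas A and B in hand, the final step is a direct computation using the formula (\ref{13}): the left side expands as
\begin{align*}
(f\st g)^{*^\tau}(h,k)
&=\bigl((f\st g)_h\bigr)^{*}(k)
=2^{-1}\bigl((f_h\ast\widetilde{g})^{*}(k)+(\widetilde{f}\ast g_h)^{*}(k)\bigr),
\end{align*}
while the right side, after substituting the two lemmas, reads
\begin{align*}
g^{*^\tau}\st f^{*^\tau}(h,k)
&=2^{-1}\bigl((g_h)^{*}\ast(\widetilde{f})^{*}(k)+(\widetilde{g})^{*}\ast(f_h)^{*}(k)\bigr).
\end{align*}
The anti-multiplicativity of the standard involution on $L^1(K)$, namely $(u\ast v)^{*}=v^{*}\ast u^{*}$, matches each summand on the left with the corresponding summand on the right (with the roles of the two terms swapped), and the proof is complete.
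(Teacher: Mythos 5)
Your proposal is correct and follows essentially the same route as the paper: the algebra norm inequality and the isometry of $*^\tau$ are taken from the preceding discussion, and the anti-homomorphism identity is verified pointwise via the formula $f\st g(h,k)=2^{-1}\bigl(f_h\ast\widetilde{g}(k)+\widetilde{f}\ast g_h(k)\bigr)$ together with anti-multiplicativity of the standard involution on $L^1(K)$. Your explicit Lemma B, namely $\widetilde{f^{*^\tau}}=(\widetilde{f})^{*}$, is used only tacitly in the paper's computation (it is proved there later, in Theorem \ref{31}), and you also keep the factor $2^{-1}$ that the paper's display drops, so your write-up is in fact slightly more careful than the original.
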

\begin{proof}
It remains to show that the $\tau$-involution is an anti-homomorphism. Let $f,g\in L^1(G_\tau)$. Using the anti-homomorphism property of the involution on $L^1(K)$, for a.e. $(h,k)\in G_\tau$ we have
\begin{align*}
(f\st g)^{*_\tau}(h,k)
&=(f\rst g)^{*_\tau}(h,k)+(f\lst g)^{*_\tau}(h,k)
\\&=\{(f\rst g)_h\}^*(k)+\{(f\lst g)_h\}^*(k)
\\&=(f_h\ast\widetilde{g})^*(k)+(\widetilde{f}\ast g_h)^*(k)
\\&=(\widetilde{g})^*\ast f_h^*(k)+g_h^*\ast(\widetilde{f})^*(k)=g^{*_\tau}\st f^{*_\tau}(h,k).
\end{align*}
\end{proof}
From now on, the notation $L^1_\tau(G_\tau)$ stands for the non-associative Banach $*$-algebra mentioned in Theorem \ref{11.3} and also we use the notations $L^1_{\tau_l}(G_\tau)$ and $L^1_{\tau_r}(G_\tau)$ for the Banach algebra mentioned in Theorem \ref{11.1} respectively Theorem \ref{11.1.2}.
A left(right) $\tau_l$-ideal of $L^1_{\tau_l}(G_\tau)$ is a subspace $\mathcal{I}$ of $L^1(G_\tau)$ such that for each $f\in \mathcal{I}$ and $g\in L^1(G_\tau)$ we have $g\lst f\in \mathcal{I}$ ($f\lst g\in \mathcal{I}$) and by a closed left(right) $\tau_l$-ideal we mean a $\|.\|_{L^1(G_\tau)}$-closed left(right) $\tau_l$-ideal in $L^1_\tau(G_\tau)$. Note that the same definitions can be considered for left, right or two sided $\tau_r$-ideals of $L^1_{\tau_r}(G_\tau)$.
In the following, we introduce a tool which transfer elements of $L^1(K)$ into the elements of $L^1(G_\tau)$. In fact we define a $*$-homomorphism form $L^1(K)$ into $L^1_\tau(G_\tau)$.
For $\Phi$ in $L^1(G_\tau)$ and $\psi$ in $L^1(K)$, let
\begin{equation}\label{15}
\Phi(\psi)(h,k):=\psi(k)\int_K\Phi(h,s)ds.
\end{equation}
Then $\Phi(\psi)\in L^1_\tau(G_\tau)$ with $\|\Phi(\psi)\|_{L^1_\tau(G_\tau)}\le\|\Phi\|_{L^1_\tau(G_\tau)}\|\psi\|_{L^1(K)}$. Indeed we have
\begin{align*}
\int_H\int_K|\Phi(\psi)(h,k)|\delta(h)dkdh
&=\int_H\int_K\left|\psi(k)\int_K\Phi(h,s)ds\right|dk\delta(h)dh
\\&\le\int_H\int_K\int_K|\psi(k)||\Phi(h,s)|dsdk\delta(h)dh
\\&=\left(\int_K|\psi(k)|dk\right)\left(\int_H\int_K|\Phi(h,s)|ds\delta(h)dh\right)
=\|\Phi\|_{L^1_\tau(G_\tau)}\|\psi\|_{L^1(K)}.
\end{align*}
We consider $L_1^+$ as the set of all nonnegative $\Phi$ in $L^1(G_\tau)$ with $\|\Phi\|_{L^1(G_\tau)}=1$.
\begin{proposition}\label{19}
{\it Let $\tau:H\to Aut(K)$ be a continuous homomorphism and $G_\tau=H\ltimes_\tau K$. Then, for each $\Phi\in L^+_1$, $f\in L^1(G_\tau)$ and also $\psi\in L^1(K)$ we have
\begin{enumerate}
\item $\displaystyle\widetilde{f}\ast\psi(k)=\int_Hf_t\ast\psi(k)\delta(t)dt$\ for\ a.e. $k\in K$,
\item $\displaystyle\psi\ast\widetilde{f}(k)=\int_H\psi\ast f_t(k)\delta(t)dt$\ for \ a.e. $k\in K$,
\item $\Phi(\psi)\st f(h,k)=2^{-1}\left(\|\Phi_h\|_{L^1(K)}\psi\ast\widetilde{f}(k)+\psi\ast f_h(k)\right)$\ for \ a.e. $(h,k)\in G_\tau$,
\item $f\st\Phi(\psi)(h,k)=2^{-1}\left(f_h\ast\psi(k)+\|\Phi_h\|_{L^1(K)}\widetilde{f}\ast\psi(k)\right)$\ for \ a.e. $(h,k)\in G_\tau$
\end{enumerate}
}\end{proposition}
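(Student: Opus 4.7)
The plan is to treat parts (1)--(2) as direct Fubini swaps and to reduce parts (3)--(4) to the pointwise formula (\ref{13}) after computing two small identities involving $\Phi(\psi)$.

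For (1), I would start from the definition of standard convolution on $K$,
\[
\widetilde{f}\ast\psi(k)=\int_K\widetilde{f}(s)\psi(s^{-1}k)ds,
\]
substitute $\widetilde{f}(s)=\int_H f_t(s)\delta(t)dt$ from (\ref{9}), and swap the $K$- and $H$-integrals by Fubini. The swap is justified because $\widetilde{f}\in L^1(K)$ (already established before Theorem \ref{11.1}) and $\psi\in L^1(K)$, so the iterated integral of the modulus is finite. The resulting inner integral is exactly $f_t\ast\psi(k)$. Part (2) is the mirror argument with $\psi$ on the left.

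For (3) and (4), the decisive step is to compute $\Phi(\psi)_h$ and $\widetilde{\Phi(\psi)}$. Directly from (\ref{15}), for a.e.\ $h\in H$ and a.e.\ $k\in K$,
\[
\Phi(\psi)_h(k)=\psi(k)\int_K\Phi(h,s)ds=\|\Phi_h\|_{L^1(K)}\psi(k),
\]
where the last equality uses $\Phi\in L_1^+$, i.e.\ $\Phi\ge 0$, so that $\int_K\Phi(h,s)ds=\|\Phi_h\|_{L^1(K)}$. Feeding this into (\ref{9}),
\[
\widetilde{\Phi(\psi)}(k)=\int_H\Phi(\psi)_t(k)\delta(t)dt=\psi(k)\int_H\|\Phi_t\|_{L^1(K)}\delta(t)dt=\psi(k),
\]
since $\int_H\|\Phi_t\|_{L^1(K)}\delta(t)dt=\|\Phi\|_{L^1(G_\tau)}=1$ by Fubini and the normalization $\Phi\in L_1^+$. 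Then plugging into (\ref{13}) gives, for a.e.\ $(h,k)\in G_\tau$,
\[
\Phi(\psi)\st f(h,k)=2^{-1}\bigl(\Phi(\psi)_h\ast\widetilde{f}(k)+\widetilde{\Phi(\psi)}\ast f_h(k)\bigr)=2^{-1}\bigl(\|\Phi_h\|_{L^1(K)}\psi\ast\widetilde{f}(k)+\psi\ast f_h(k)\bigr),
\]
which is (3); the bilinearity of standard convolution on $L^1(K)$ is used to pull the scalar $\|\Phi_h\|_{L^1(K)}$ out of the convolution. Part (4) is obtained by the symmetric substitution on the other side of (\ref{13}).

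There is no real conceptual obstacle here; the proposition is essentially a bookkeeping exercise. The only small technical point to be careful about is the justification of the Fubini swaps in (1) and (2), and the appeal to $\Phi\ge 0$ in identifying $\int_K\Phi(h,s)ds$ with $\|\Phi_h\|_{L^1(K)}$ --- without the nonnegativity hypothesis built into $L_1^+$, these two quantities would differ, so the assumption $\Phi\in L_1^+$ is used in an essential way at exactly this point, not merely for the normalization $\|\Phi\|_{L^1(G_\tau)}=1$.
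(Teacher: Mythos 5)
Your proposal is correct and follows essentially the same route as the paper: parts (1)--(2) by the same Fubini interchange, and parts (3)--(4) by substituting into (\ref{13}) via the identities $\Phi(\psi)_h=\|\Phi_h\|_{L^1(K)}\,\psi$ and $\widetilde{\Phi(\psi)}=\psi$, which the paper uses implicitly and you merely make explicit (including the correct observation of where nonnegativity and normalization of $\Phi$ are each used).
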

\begin{proof} (1) Let $f\in L^1(G_\tau)$ and $\psi\in L^1(K)$. Then for a.e. $k\in K$ we have
\begin{align*}
\widetilde{f}\ast\psi(k)&=\int_K\widetilde{f}(s)\psi(s^{-1}k)ds
\\&=\int_K\left(\int_Hf(t,s)\delta(t)dt\right)\psi(s^{-1}k)ds
\\&=\int_H\left(\int_Kf(t,s)\psi(s^{-1}k)ds\right)\delta(t)dt=\int_Hf_t\ast\psi(k)\delta(t)dt.
\end{align*}
Similarly for a.e $k\in K$ we have
$$\displaystyle\psi\ast\widetilde{f}(k)=\int_H\psi\ast f_t(k)\delta(t)dt.$$
(3) Let $\Phi,f\in L^1(G_\tau)$ and $\psi\in L^1(K)$. Using (\ref{13}) for a.e. $(h,k)\in G_\tau$ we have
\begin{align*}
\Phi(\psi)\st f(h,k)&=2^{-1}\left(\left(\Phi(\psi)\right)_h\ast\widetilde{f}+\widetilde{\Phi(\psi)}\ast f_h(k)\right)
\\&=2^{-1}\left(\psi\ast\widetilde{f}(k)\|\Phi_h\|_{L^1(K)}+\psi\ast f_h(k)\right).
\end{align*}
Similarly for a.e $(h,k)\in G_\tau$ we have
$$f\st\Phi(\psi)(h,k)=2^{-1}\left(\widetilde{f}\ast\psi(k)\|\Phi_h\|_{L^1(K)}+f_h\ast\psi(k)\right).$$
\end{proof}
Next theorem shows that each $\Phi\in L^+_1$ defines a $*$-homomorphism from $L^1(K)$ into $L^1_\tau(G_\tau)$.
\begin{theorem}\label{30}
Let $\tau:H\to Aut(K)$ be a continuous homomorphism and $G_\tau=H\ltimes_\tau K$ also let $\Phi\in L^+_1$.
Then, the linear map $\lambda_\Phi:L^1(K)\to L^1(G_\tau)$ defined by $\psi\mapsto \lambda_\Phi(\psi):=\Phi(\psi)$ is an isometric $*$-homomorphism.
\end{theorem}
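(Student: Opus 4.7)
The plan is to verify the three conditions defining an isometric $*$-homomorphism: isometry, multiplicativity with respect to $\ast$ on $L^1(K)$ and $\st$ on $L^1_\tau(G_\tau)$, and compatibility with the involutions. Linearity of $\lambda_\Phi$ is immediate from the formula $\Phi(\psi)(h,k)=\psi(k)\int_K\Phi(h,s)\,ds$, since the second factor is $\psi$-independent.

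First I would upgrade the inequality $\|\Phi(\psi)\|_{L^1(G_\tau)}\le\|\Phi\|_{L^1(G_\tau)}\|\psi\|_{L^1(K)}$ established just before Proposition \ref{19} to an equality. The point is that $\Phi\in L_1^+$ is nonnegative with $\|\Phi\|_{L^1(G_\tau)}=1$, so $\int_K\Phi(h,s)\,ds=\|\Phi_h\|_{L^1(K)}$ is a nonnegative real number and Fubini gives $\|\Phi(\psi)\|_{L^1(G_\tau)}=\|\psi\|_{L^1(K)}\int_H\|\Phi_h\|_{L^1(K)}\delta(h)\,dh=\|\psi\|_{L^1(K)}$. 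This proves $\lambda_\Phi$ is an isometry.

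For multiplicativity, the key observation is that if $\Phi\in L_1^+$ then for any $\psi\in L^1(K)$,
\begin{equation*}
\widetilde{\Phi(\psi)}(k)=\int_H\Phi(\psi)(t,k)\delta(t)\,dt=\psi(k)\int_H\|\Phi_t\|_{L^1(K)}\delta(t)\,dt=\psi(k),
\end{equation*}
and $(\Phi(\psi))_h(k)=\|\Phi_h\|_{L^1(K)}\psi(k)$. Applying Proposition \ref{19}(3) with $f=\Phi(\psi_2)$ and $\psi=\psi_1$ and using these two identities, both terms in the parentheses collapse to $\|\Phi_h\|_{L^1(K)}\,\psi_1\ast\psi_2(k)$, yielding
\begin{equation*}
\Phi(\psi_1)\st\Phi(\psi_2)(h,k)=\|\Phi_h\|_{L^1(K)}\,\psi_1\ast\psi_2(k)=\lambda_\Phi(\psi_1\ast\psi_2)(h,k)
\end{equation*}
for a.e.\ $(h,k)\in G_\tau$. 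Finally, for the involution compatibility, I would compute directly: $\lambda_\Phi(\psi)^{*^\tau}(h,k)=\Delta_K(k^{-1})\overline{\lambda_\Phi(\psi)_h(k^{-1})}=\Delta_K(k^{-1})\overline{\psi(k^{-1})}\,\|\Phi_h\|_{L^1(K)}=\psi^*(k)\,\|\Phi_h\|_{L^1(K)}=\lambda_\Phi(\psi^*)(h,k)$, where the crucial point is that $\|\Phi_h\|_{L^1(K)}$ is a nonnegative real scalar because $\Phi\ge 0$, so complex conjugation leaves it fixed.

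The only step where something could go wrong is the multiplicativity calculation, since the $\tau$-convolution is defined as an average of two noncommuting operations and is generally non-associative. The saving grace here is exactly that membership $\Phi\in L_1^+$ forces $\widetilde{\Phi(\psi)}=\psi$, which makes the two terms in formula (\ref{13}) coincide. Without the normalization $\|\Phi\|_{L^1(G_\tau)}=1$ and the nonnegativity, one would pick up an extra scalar and the homomorphism property would fail; this is the reason the hypothesis $\Phi\in L_1^+$ is essential rather than cosmetic.
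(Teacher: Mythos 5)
Your proposal is correct and follows essentially the same route as the paper: isometry from nonnegativity plus $\|\Phi\|_{L^1(G_\tau)}=1$, multiplicativity by collapsing both terms of the $\tau$-convolution to $\|\Phi_h\|_{L^1(K)}\,\psi_1\ast\psi_2(k)$, and the involution identity by a direct computation using that $\|\Phi_h\|_{L^1(K)}$ is a nonnegative real. The only cosmetic difference is that you invoke Proposition \ref{19}(3) together with $\widetilde{\Phi(\psi)}=\psi$, while the paper expands $\rst$ and $\lst$ directly from their integral definitions; the underlying calculation is the same.
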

\begin{proof}
According to the preceding result and also (\ref{15}) if $\Phi\in L^+_1$ then for each $\psi\in L^1(K)$ we have
$$\|\Phi(\psi)\|_{L^1(G_\tau)}=\|\Phi\|_{L^1(G_\tau)}\|\psi\|_{L^1(K)}
=\|\psi\|_{L^1(K)}.$$
Let $\Phi\in L^+_1$ and also $\psi,\phi\in L^1(K)$. Then for a.e. $(h,k)\in G_\tau$ we have
\begin{equation}\label{30.1}
\Phi(\psi)\st\Phi(\phi)(h,k)=\psi\ast\phi(k)\|\Phi_h\|_{L^1(K)}.
\end{equation}
Because
\begin{align*}
\Phi(\psi)\rst\Phi(\phi)(h,k)
&=\int_H\Phi(\psi)_h\ast\Phi(\phi)_t(k)\delta(t)dt
\\&=\int_H\left(\int_K\Phi(\psi)(h,s)\Phi(\phi)(t,s^{-1}k)ds\right)\delta(t)dt
\\&=\int_H\left(\int_K\psi(s)\phi(s^{-1}k)\left(\int_K\Phi(h,s')ds'\right)\left(\int_K\Phi(t,s'')ds''\right)ds\right)\delta(t)dt
\\&=\int_H\left(\int_K\Phi(h,s')ds'\right)\left(\int_K\Phi(t,s'')ds''\right)\left(\int_K\psi(s)\phi(s^{-1}k)ds\right)\delta(t)dt
\\&=\psi\ast\phi(k)\|\Phi_h\|_{L^1(K)}\int_H\left(\int_K\Phi(t,s'')ds''\right)\delta(t)dt=\psi\ast\phi(k)\|\Phi_h\|_{L^1(K)}.
\end{align*}
Similarly for a.e. $(h,k)\in G_\tau$ we have $\Phi(\psi)\lst\Phi(\phi)(h,k)=\psi\ast\phi(k)\|\Phi_h\|_{L^1(K)}$ which implies (\ref{30.1}).
Now our pervious result (\ref{30.1}) implies that for each $\psi,\phi\in L^1(K)$ and also for a.e. $(h,k)\in G_\tau$ we have
$$\Phi(\psi\ast\phi)(h,k)
=\psi\ast \phi(k)\int_K\Phi(h,s)ds
=\Phi(\psi)\st\Phi(\phi)(h,k).$$
Also for each $\psi\in L^1(K)$ and for a.e. $(h,k)\in G_\tau$ we have
\begin{align*}
\Phi(\psi^*)(h,k)&=\psi^*(k)\int_K\Phi(h,s)ds
\\&=\left(\Phi(\psi)_h\right)^*(k)=\Phi(\psi)^{*_\tau}(h,k).
\end{align*}
\end{proof}
According to Theorem \ref{30}, for each $\Phi\in L^+_1$ the set $\lambda_\Phi(L^1(K))=\{\Phi(\psi):\psi\in L^1(K)\}$ is a closed $*$-subalgebra of $L^1_{\tau_l}(G_\tau)$ and also $L^1_{\tau_r}(G_\tau)$.
The following corollary  is a consequence of the theorem \ref{30}.
\begin{corollary}
{\it Let $\tau:H\to Aut(K)$ be a continuous homomorphism and $G_\tau=H\ltimes_\tau K$ and also let $\Phi\in L_1^+$. If $\mathcal{J}$ is a closed left(right) ideal of $L^1(K)$ then $\lambda_\Phi(\mathcal{J})$ is a closed left(right) $\tau_r$-ideal($\tau_l$-ideal) of $\lambda_\Phi(L^1(K))$.}
\end{corollary}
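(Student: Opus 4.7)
The plan is to derive the corollary as a direct consequence of Theorem \ref{30} together with the identity
\[
\Phi(\phi) \rst \Phi(\psi) \;=\; \Phi(\phi) \lst \Phi(\psi) \;=\; \Phi(\phi \ast \psi),
\]
which was essentially established (for $\rst$) inside the proof of Theorem \ref{30}, and was noted there to hold for $\lst$ as well. The content is therefore that $\lambda_\Phi$ simultaneously intertwines $\ast$ on $L^1(K)$ with both $\rst$ and $\lst$ restricted to $\lambda_\Phi(L^1(K))$, so the statement collapses to transporting the ideal property of $\mathcal{J}$ across $\lambda_\Phi$ and checking closedness.

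First I would dispose of closedness. By Theorem \ref{30}, $\lambda_\Phi: L^1(K) \to L^1_\tau(G_\tau)$ is an isometry, hence a homeomorphism onto its image, so the image of a closed subspace is closed in $L^1_\tau(G_\tau)$ (and a fortiori in $\lambda_\Phi(L^1(K))$). Concretely, if $\Phi(\psi_n) \to F$ in $L^1(G_\tau)$ with each $\psi_n\in\mathcal{J}$, the isometry forces $(\psi_n)$ to be Cauchy in $L^1(K)$; its limit $\psi$ lies in $\mathcal{J}$ because $\mathcal{J}$ is closed, and by continuity of $\lambda_\Phi$ we obtain $F = \Phi(\psi)\in\lambda_\Phi(\mathcal{J})$.

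Next, for the left-ideal case, pick $\Phi(\psi)\in\lambda_\Phi(\mathcal{J})$ (so $\psi\in\mathcal{J}$) and an arbitrary $\Phi(\phi)\in\lambda_\Phi(L^1(K))$. Using the identity above,
\[
\Phi(\phi) \rst \Phi(\psi) \;=\; \Phi(\phi\ast\psi),
\]
and since $\mathcal{J}$ is a left ideal of $L^1(K)$ we have $\phi\ast\psi\in\mathcal{J}$, so $\Phi(\phi)\rst\Phi(\psi)\in\lambda_\Phi(\mathcal{J})$, which is exactly the left $\tau_r$-ideal condition. The right-ideal case is entirely symmetric: one uses $\Phi(\psi)\lst\Phi(\phi) = \Phi(\psi\ast\phi)$ together with $\psi\ast\phi\in\mathcal{J}$ when $\mathcal{J}$ is a right ideal.

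I do not expect any real obstacle here, since the corollary is a pure transport-of-structure statement riding on Theorem \ref{30}. The only thing to double-check, and what I would verify most carefully, is the left/right bookkeeping: the paper's convention is that a \emph{left} $\tau_l$-ideal (resp.\ $\tau_r$-ideal) is one closed under multiplication on the \emph{left} by arbitrary elements, i.e.\ the ideal element sits on the right of the convolution. With that convention fixed, a closed left ideal of $L^1(K)$ is correctly paired with the left $\tau_r$-ideal notion on the $G_\tau$-side, and a closed right ideal with the right $\tau_l$-ideal notion, precisely as the corollary is phrased.
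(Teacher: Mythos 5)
Your proposal is correct and is exactly the derivation the paper intends: the paper states this corollary without a separate proof, presenting it as a direct consequence of Theorem \ref{30}, and your argument (isometry of $\lambda_\Phi$ for closedness, plus the identity $\Phi(\phi)\stackrel{\tau_r}{\ast}\Phi(\psi)=\Phi(\phi\ast\psi)$ and its $\stackrel{\tau_l}{\ast}$ counterpart from the proof of that theorem) supplies precisely that consequence. Your left/right bookkeeping also matches the paper's convention that the ideal element sits on the right of the product in a left ideal, so nothing is missing.
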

Next theorem shows that $f\mapsto\widetilde{f}$ is a norm decreasing $*$-homomorphism from $L^1_\tau(G_\tau)$ onto $L^1(K)$.
\begin{theorem}\label{31}
The map defined by $f\mapsto S_K^\tau(f)=\widetilde{f}$ is a norm decreasing $*$-homomorphism from $L^1_\tau(G_\tau)$ onto $L^1(K)$.
\end{theorem}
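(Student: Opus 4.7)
The plan is to verify each of the four properties \emph{linearity}, \emph{norm decreasing}, \emph{$*$-homomorphism}, and \emph{surjectivity}, since linearity is immediate from the definition $\widetilde f(k)=\int_H f_t(k)\delta(t)dt$, and the norm-decreasing inequality $\|\widetilde f\|_{L^1(K)}\le\|f\|_{L^1(G_\tau)}$ has already been established in the estimate immediately following (\ref{9}). The real work is to check that $\widetilde{\,\cdot\,}$ intertwines $\tau$-convolution with standard convolution on $L^1(K)$, intertwines $\tau$-involution with standard involution on $L^1(K)$, and hits every element of $L^1(K)$.

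For the convolution identity $\widetilde{f\st g}=\widetilde f\ast\widetilde g$, I would start from the pointwise formula (\ref{13}), integrate against $\delta(t)dt$ in $H$, and apply Fubini to move the $H$-integral inside the $K$-convolution. Concretely,
\begin{align*}
\widetilde{f\st g}(k)
&=2^{-1}\int_H\bigl(f_t\ast\widetilde g(k)+\widetilde f\ast g_t(k)\bigr)\delta(t)dt,
\end{align*}
and Proposition \ref{19}(1) applied with $\psi=\widetilde g$ rewrites the first summand as $\widetilde f\ast\widetilde g(k)$, while Proposition \ref{19}(2) applied with $\psi=\widetilde f$ rewrites the second summand as $\widetilde f\ast\widetilde g(k)$, yielding the desired equality. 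For the involution identity $\widetilde{f^{*^\tau}}=(\widetilde f)^*$, I would expand using (\ref{14}) and the explicit formula $(f_t)^*(k)=\Delta_K(k^{-1})\overline{f(t,k^{-1})}$, then pull the factor $\Delta_K(k^{-1})$ and the complex conjugation outside the $H$-integral to recognise $\Delta_K(k^{-1})\overline{\widetilde f(k^{-1})}=(\widetilde f)^*(k)$.

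For surjectivity, I would fix once and for all any $\Phi\in L^+_1$ (such a $\Phi$ exists, e.g. any nonnegative $L^1$ function on $G_\tau$ of total mass one, or a product $\varphi\otimes\eta$ with $\varphi$ a nonnegative $L^1$-density on $H$ with respect to $\delta(h)dh$ and $\eta$ a probability density on $K$) and send $\psi\in L^1(K)$ to $\Phi(\psi)\in L^1(G_\tau)$ as defined in (\ref{15}). A direct computation gives
\begin{align*}
\widetilde{\Phi(\psi)}(k)
=\int_H\psi(k)\!\left(\int_K\Phi(t,s)\,ds\right)\!\delta(t)\,dt
=\psi(k)\,\|\Phi\|_{L^1(G_\tau)}=\psi(k),
\end{align*}
so $S_K^\tau(\Phi(\psi))=\psi$, proving that $S_K^\tau$ is onto.

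The main obstacle is the homomorphism property with respect to $\st$, which is the only non-cosmetic step; however, because Proposition \ref{19}(1)--(2) is essentially tailored to this purpose, the symmetric structure of (\ref{13}) in the roles of $f_t\ast\widetilde g$ and $\widetilde f\ast g_t$ makes both halves reduce to the same quantity $\widetilde f\ast\widetilde g$, collapsing the factor $2^{-1}$ exactly. Surjectivity and the $*$-property are then routine given the formulas already on the page.
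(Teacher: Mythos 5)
Your proposal is correct and follows essentially the same route as the paper's proof: the identity $\widetilde{f\st g}=\widetilde f\ast\widetilde g$ via the Fubini-type interchange (which you outsource to Proposition \ref{19}(1)--(2) together with formula (\ref{13}), whereas the paper redoes the computation separately for $\rst$ and $\lst$ and averages), the same pointwise calculation for $\widetilde{f^{*^\tau}}=(\widetilde f)^*$, the norm bound already recorded after (\ref{9}), and surjectivity via $\widetilde{\Phi(\psi)}=\psi$ for a fixed $\Phi\in L_1^+$. The only difference is this cosmetic repackaging through the already-proved Proposition \ref{19}, plus your (welcome but not strictly needed) remark that $L_1^+$ is nonempty.
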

\begin{proof}
Let $f,g\in L^1_\tau(G_\tau)$. Using Fubini's theorem for a.e. $k$ in $K$ we have
\begin{align*}
\widetilde{f\rst g}(k)
&=\int_{H}f\ast_\tau g(h,k)\delta(h)dh
\\&=\int_H\left(\int_Hf_h\ast g_t(k)\delta(t)dt\right)\delta(h)dh
\\&=\int_H\int_H\left(\int_Kf(h,s)g(t,s^{-1}k)ds\right)\delta(h)\delta(t)dtdh
\\&=\int_K\left(\int_Hf(h,s)\delta(h)dh\right)\left(\int_Hg(t,s^{-1}k)\delta(t)dt\right)ds
\\&=\int_K\widetilde{f}(s)\widetilde{g}(s^{-1}k)ds
=\widetilde{f}\ast\widetilde{g}(k).
\end{align*}
Similarly for a.e. $k\in K$ we have $\widetilde{f\lst g}(k)=\widetilde{f}\ast\widetilde{g}(k).$ Thus we have
$$\widetilde{f\st g}=2^{-1}\left(\widetilde{f\rst g}+\widetilde{f\lst g}\right)=\widetilde{f}\ast\widetilde{g}.$$
Also for a.e. $k$ in $K$ we have
\begin{align*}
\widetilde{f^{*_\tau}}(k)
&=\int_Hf^{*_\tau}(t,k)\delta(t)dt
\\&=\int_Hf_t^*(k)\delta(t)dt
\\&=\Delta_K(k^{-1})\int_H\overline{f_t(k^{-1})}\delta(t)dt={\left(\widetilde{f}\right)}^{*}(k).
\end{align*}
Note that since $\|\widetilde{f}\|_{L^1(K)}\le \|f\|_{L^1(G_\tau)}$ so the map $f\mapsto \widetilde{f}$ is norm-decreasing. Now let $\Phi\in L^+_1$ and $\psi$ in $L^1(K)$ be arbitrary. Then for a.e. $k\in K$ we have
\begin{align*}
\widetilde{\Phi(\psi)}(k)&=\int_H\Phi(\psi)(t,k)\delta(t)dt
\\&=\int_H\psi(k)\left(\int_K\Phi(t,s)ds\right)\delta(t)dt
\\&=\psi(k)\int_H\int_K\Phi(t,s)ds\delta(t)dt=\psi(k).
\end{align*}
Which implies that $\widetilde{L^1_\tau(G_\tau)}=\{\widetilde{f}:f\in L^1_\tau(G_\tau)\}=L^1(K)$.
\end{proof}
Also we can conclude the following corollary. Note that the same result holds for $L^1_{\tau_l}(G_\tau)$.
\begin{corollary}\label{}
{\it The map $f\mapsto S_K^\tau(f)=\widetilde{f}$ is a norm decreasing $*$-homomorphism from $L^1_{\tau_r}(G_\tau)$ onto $L^1(K)$.}
\end{corollary}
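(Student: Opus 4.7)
The plan is to observe that every ingredient needed for the corollary is already present inside the proof of Theorem~\ref{31}; the task is just to isolate the pieces that depend only on $\rst$ (not on $\lst$ or $\st$) and reassemble them. Concretely, to be a norm-decreasing $*$-homomorphism onto $L^1(K)$, the map $S_K^\tau$ must satisfy four things: the norm estimate, multiplicativity with respect to $\rst$, compatibility with the involutions, and surjectivity.

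For the norm estimate $\|\widetilde{f}\|_{L^1(K)}\le\|f\|_{L^1(G_\tau)}$ I would cite the short computation given immediately after the definition in (\ref{9}); it mentions neither convolution nor involution and so transfers unchanged. For multiplicativity, the proof of Theorem~\ref{31} already derives the identity $\widetilde{f\rst g}=\widetilde{f}\ast\widetilde{g}$ by a Fubini interchange applied to the double integral over $H$, and this line transfers verbatim without any reference to $\lst$. For compatibility with involutions, the identity $\widetilde{f^{*_\tau}}=(\widetilde{f})^{*}$ proved in Theorem~\ref{31} uses only the pointwise definition (\ref{14}) of the $\tau$-involution, which is independent of which of the three products we place on $L^1(G_\tau)$. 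Finally, surjectivity comes from $\widetilde{\Phi(\psi)}=\psi$ for $\Phi\in L^{+}_{1}$ and $\psi\in L^1(K)$, established in the last paragraph of the proof of Theorem~\ref{31}; this shows that $\lambda_\Phi$ of Theorem~\ref{30} is a set-theoretic right inverse of $S_K^\tau$, and therefore the image is all of $L^1(K)$.

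The only point requiring a small comment, which I would not call a genuine obstacle, is that to even speak of a $*$-homomorphism out of $L^1_{\tau_r}(G_\tau)$ one must make explicit that the source is equipped with the $\tau$-involution of (\ref{14}). The content of the corollary is then that $S_K^\tau$ intertwines $*_\tau$ on the source with the standard involution $*$ on $L^1(K)$, which is exactly what the identity $\widetilde{f^{*_\tau}}=(\widetilde{f})^{*}$ records; I would insert a single sentence to this effect at the start of the proof to avoid any ambiguity about the $*$-structure being used.
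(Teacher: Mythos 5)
Your proposal is correct and matches the paper's own treatment: the paper offers no separate argument for this corollary, precisely because the proof of Theorem \ref{31} already establishes the identity $\widetilde{f\rst g}=\widetilde{f}\ast\widetilde{g}$ on its own, together with $\widetilde{f^{*_\tau}}=(\widetilde{f})^{*}$, the norm bound $\|\widetilde{f}\|_{L^1(K)}\le\|f\|_{L^1(G_\tau)}$, and surjectivity via $\widetilde{\Phi(\psi)}=\psi$. Your reassembly of exactly these four ingredients, restricted to the $\rst$ product, is the intended proof.
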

Let $\mathcal{J}_\tau^1:=\{f\in L^1_\tau(G_\tau):S_K^\tau(f)=\widetilde{f}=0\}$. Theorem \ref{31} implies that $\mathcal{J}_\tau^1$ is a closed two sided $\tau$-ideal in $L^1_\tau(G_\tau)$.

As an immediate application of the theorem \ref{31} we show that if $\mathcal{I}$ is a closed left(right) $\tau$-ideal of $L^1_\tau(G_\tau)$ with $\mathcal{J}_\tau^1\subseteq\mathcal{I}$ then $\widetilde{\mathcal{I}}=\{\widetilde{f}:f\in \mathcal{I}\}$ is a closed left(right) ideal of $L^1(K)$.
\begin{corollary}\label{32}
{\it Let $\tau:H\to Aut(K)$ be a continuous homomorphism and $G_\tau=H\ltimes_\tau K$. If $\mathcal{I}$ is a closed left(right) $\tau$-ideal of $L^1_\tau(G_\tau)$ with $\mathcal{J}_\tau^1\subseteq\mathcal{I}$ then $\widetilde{I}$ is a closed left(right) ideal of $L^1(K)$.}
\end{corollary}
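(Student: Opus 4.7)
The plan is to view $S_K^\tau : L^1_\tau(G_\tau) \to L^1(K)$, $f \mapsto \widetilde{f}$, as a surjective continuous $*$-homomorphism with kernel $\mathcal{J}_\tau^1$ (as supplied by Theorem \ref{31}), and then exploit the lifting map $\psi \mapsto \Phi(\psi)$ produced in Theorem \ref{30} to promote an ideal relation in $L^1_\tau(G_\tau)$ to one in $L^1(K)$.

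\emph{Closedness of $\widetilde{\mathcal{I}}$.} Since $S_K^\tau$ is a continuous linear surjection between Banach spaces, the open mapping theorem yields that it is open and induces a topological isomorphism between the quotient Banach space $L^1_\tau(G_\tau)/\mathcal{J}_\tau^1$ and $L^1(K)$. The hypothesis $\mathcal{J}_\tau^1\subseteq\mathcal{I}$ means that $\mathcal{I}$ is saturated with respect to the kernel, so $\mathcal{I}/\mathcal{J}_\tau^1$ is closed in the quotient (because $\mathcal{I}$ itself is closed). Its image under the topological isomorphism, which is exactly $\widetilde{\mathcal{I}}$, is therefore closed in $L^1(K)$.

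\emph{Ideal property.} Fix any $\Phi\in L_1^+$. For $\psi\in L^1(K)$ and $f\in\mathcal{I}$, Theorem \ref{30} places $\Phi(\psi)$ in $L^1_\tau(G_\tau)$, and the computation in the proof of Theorem \ref{31} gives $\widetilde{\Phi(\psi)}=\psi$. If $\mathcal{I}$ is a left $\tau$-ideal of $L^1_\tau(G_\tau)$, then $\Phi(\psi)\st f\in\mathcal{I}$; applying $S_K^\tau$ and using its multiplicativity (Theorem \ref{31}) gives
\[
\psi\ast\widetilde{f} \;=\; \widetilde{\Phi(\psi)}\ast\widetilde{f} \;=\; \widetilde{\Phi(\psi)\st f} \;\in\; \widetilde{\mathcal{I}},
\]
which verifies the left ideal property. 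The right ideal case is completely analogous, using $f\st\Phi(\psi)$ in place of $\Phi(\psi)\st f$.

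\emph{Main obstacle.} The computation is quite mechanical once Theorems \ref{30} and \ref{31} are in hand, and the only conceptual subtlety is the non-associativity: although the product $\st$ in $L^1_\tau(G_\tau)$ is not associative, the target algebra $L^1(K)$ is, and $S_K^\tau$ intertwines $\st$ with the ordinary convolution on $L^1(K)$, so associativity of $L^1_\tau(G_\tau)$ is never invoked. The hypothesis $\mathcal{J}_\tau^1\subseteq\mathcal{I}$ is precisely what makes the closedness step work; without it the preimage $(S_K^\tau)^{-1}(\widetilde{\mathcal{I}})$ could be strictly larger than $\mathcal{I}$ and one would need an additional argument to recover closedness of $\widetilde{\mathcal{I}}$ from that of $\mathcal{I}$.
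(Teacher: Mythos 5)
Your proof is correct, and it splits the work the same way the paper does: the ideal property of $\widetilde{\mathcal{I}}$ follows, without the hypothesis $\mathcal{J}_\tau^1\subseteq\mathcal{I}$, by lifting $\psi\in L^1(K)$ to $\Phi(\psi)$ with $\widetilde{\Phi(\psi)}=\psi$ and pushing $\Phi(\psi)\st f$ (resp.\ $f\st\Phi(\psi)$) forward through the $*$-homomorphism $S_K^\tau$ of Theorem \ref{31}; this half coincides with the paper's argument. Where you differ is the closedness step: you treat $S_K^\tau$ as a continuous surjection with kernel $\mathcal{J}_\tau^1$, invoke the open mapping theorem to identify $L^1_\tau(G_\tau)/\mathcal{J}_\tau^1$ with $L^1(K)$ topologically, and use that a closed saturated subspace has closed image in the quotient. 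The paper instead argues concretely with a sequence $\{f_n\}\subset\mathcal{I}$, $\widetilde{f_n}\to\psi$: since $\lambda_\Phi$ is isometric, $\Phi(\widetilde{f_n})\to\Phi(\psi)$ in $L^1_\tau(G_\tau)$, and the hypothesis $\mathcal{J}_\tau^1\subseteq\mathcal{I}$ enters through the observation that $\Phi(\widetilde{f})-f\in\mathcal{J}_\tau^1$, hence $\Phi(\widetilde{f})\in\mathcal{I}$ for every $f\in\mathcal{I}$; closedness of $\mathcal{I}$ then gives $\Phi(\psi)\in\mathcal{I}$ and $\psi=\widetilde{\Phi(\psi)}\in\widetilde{\mathcal{I}}$. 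Your soft argument buys brevity and makes transparent exactly where saturation is used (and your closing remark about what fails without $\mathcal{J}_\tau^1\subseteq\mathcal{I}$ is the right diagnosis), while the paper's route is more elementary, avoiding the open mapping theorem entirely by exploiting the explicit isometric section $\lambda_\Phi$ of $S_K^\tau$; both are complete proofs.
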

\begin{proof}
In general setting using Proposition \ref{31} if $\mathcal{I}$ is left(right) $\tau$-ideal of $L^1_\tau(G_\tau)$ then $\widetilde{\mathcal{I}}$ is a left(right) ideal of $L^1(K)$. Now let $\mathcal{I}$ be closed in $L^1_\tau(G_\tau)$. We show that $\widetilde{\mathcal{I}}$ is closed in $L^1(K)$. Let $\psi\in L^1(K)$ and $\{f_n\}\subset \mathcal{I}$ with $\psi=\|.\|_{L^1(K)}-\lim_{n}\widetilde{f_n}$. If $\Phi\in L_1^+$ we have $\Phi(\psi)=\|.\|_{L^1_\tau(G_\tau)}-\lim_n\Phi(\widetilde{f_n})$. Since $\mathcal{J}_\tau^1\subseteq\mathcal{I}$ for each $f\in\mathcal{I}$ we have $\Phi(\widetilde{f})\in\mathcal{I}$. Using proposition \ref{31} and also since for each $n$ we have $\Phi(\widetilde{f_n})\in \mathcal{I}$ and $\mathcal{I}$ is closed we get
$$\Phi(\psi)=\|.\|_{L^1(G_\tau)}-\lim_n\Phi(\widetilde{f_n})\in\mathcal{I}.$$
So we have $\psi=\widetilde{\Phi(\psi)}\in\widetilde{\mathcal{I}}$.
\end{proof}
In the following corollary we show that if $L^1_\tau(G_\tau)$ has an identity then $K$ should be discrete.
\begin{proposition}\label{}
{\it Let $\tau:H\to Aut(K)$ be a continuous homomorphism and $G_\tau=H\ltimes_\tau K$. If Banach algebra $L^1_{\tau_r}(G_\tau)$ or $L^1_{\tau_l}(G_\tau)$ has identity, then $K$ is discrete.}
\end{proposition}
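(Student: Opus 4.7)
The plan is to exploit the surjective $*$-homomorphism $S_K^\tau \colon f\mapsto \widetilde{f}$ from $L^1_{\tau_r}(G_\tau)$ (respectively $L^1_{\tau_l}(G_\tau)$) onto $L^1(K)$ that is supplied by Theorem \ref{31} and the corollary following it. The idea is that an identity downstairs forces an identity in $L^1(K)$, and then a classical fact finishes the job.

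Assume first that $L^1_{\tau_r}(G_\tau)$ has an identity $e$, so $e\rst f = f = f\rst e$ for all $f\in L^1(G_\tau)$. I would apply $S_K^\tau$ to both equalities. Since $S_K^\tau$ is a (surjective) homomorphism for the right $\tau$-convolution, we get
\begin{equation*}
\widetilde{e}\ast\widetilde{f} \;=\; \widetilde{e\rst f} \;=\; \widetilde{f} \;=\; \widetilde{f\rst e} \;=\; \widetilde{f}\ast\widetilde{e}
\end{equation*}
for every $f\in L^1(G_\tau)$. Because the range of $S_K^\tau$ is all of $L^1(K)$, every $\psi\in L^1(K)$ arises as some $\widetilde{f}$, and hence $\widetilde{e}\ast\psi=\psi=\psi\ast\widetilde{e}$ for all $\psi\in L^1(K)$. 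Thus $\widetilde{e}$ is a two-sided identity for the standard convolution on $L^1(K)$.

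To finish, I invoke the classical fact that $L^1(K)$ possesses a convolution identity if and only if $K$ is discrete (the existence of a bounded approximate identity is only upgraded to a true identity when counting measure is a Haar measure; see, e.g., Folland). This gives $K$ discrete. The argument for $L^1_{\tau_l}(G_\tau)$ is identical, using the companion statement in the corollary after Theorem \ref{31} that $S_K^\tau$ is also a surjective $*$-homomorphism from $L^1_{\tau_l}(G_\tau)$ onto $L^1(K)$, so that $\widetilde{e}\ast \psi = \psi = \psi\ast\widetilde{e}$ follows exactly as above from $e\lst f = f = f\lst e$.

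The only subtle point is the surjectivity of $S_K^\tau$, without which the identity $\widetilde{e}$ would merely satisfy $\widetilde{e}\ast\widetilde{f}=\widetilde{f}$ on a subspace of $L^1(K)$; but surjectivity is precisely what is established at the end of the proof of Theorem \ref{31} (via $\widetilde{\Phi(\psi)}=\psi$ for any $\Phi\in L_1^+$). Apart from that, the proof is a two-line consequence of the transfer of structure to $L^1(K)$ together with the discreteness criterion for groups whose $L^1$-algebra is unital.
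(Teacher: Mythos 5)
Your proposal is correct and follows essentially the same route as the paper: push the identity through the surjective homomorphism $S_K^\tau(f)=\widetilde{f}$ of Theorem \ref{31}, conclude that $\widetilde{e}$ is an identity for $L^1(K)$, and invoke the classical fact (Hewitt--Ross, Theorems 19.19--19.20, cited in the paper) that $L^1(K)$ is unital only when $K$ is discrete. The only cosmetic difference is that the paper writes out the integral computation $\widetilde{f}\ast\widetilde{e}=\widetilde{f\lst e}=\widetilde{f}$ explicitly instead of quoting the homomorphism property on both sides.
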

\begin{proof}
It is enough to prove the result for the left $\tau$-convolution. Let $\epsilon_{\tau_l}$ be an identity for the left $\tau$-convolution. Using Theorem \ref{31} for each $f\in L^1_{\tau_l}(G_\tau)$ and also for a.e. $k\in K$ we have
\begin{align*}
\widetilde{f}\ast\widetilde{e_\tau}(k)
&=\widetilde{f\ast(e_\tau)}(k)
\\&=\int_Hf\st e_\tau(h,k)\delta(h)dh
\\&=\int_Hf(h,k)\delta(h)dh=\widetilde{f}(k).
\end{align*}
Now since the linear map $S_K^\tau:L^1_\tau(G_\tau)\to L^1(K)$ is surjective, $\widetilde{e_\tau}$ is an identity for $L^1(K)$. Thus Theorem 19.19 and also Theorem 19.20 of \cite{HR1}  imply that $K$ is discrete.
\end{proof}
\begin{corollary}\label{33}
{\it Let $\tau:H\to Aut(K)$ be a continuous homomorphism and $G_\tau=H\ltimes_\tau K$. If $\tau$-convolution has identity, then $K$ is discrete.}
\end{corollary}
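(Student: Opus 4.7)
The plan is to mimic the proof of the preceding proposition, but apply it through the $*$-homomorphism $S_K^\tau$ rather than separately through the left or right $\tau$-convolution. The key tool is Theorem \ref{31}, which says that $f \mapsto \widetilde{f}$ is a surjective $*$-homomorphism from $L^1_\tau(G_\tau)$ onto $L^1(K)$, where the product on the left is $\st$ and the product on the right is the ordinary convolution on $L^1(K)$.

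First I would assume that $\st$ admits an identity element $e_\tau \in L^1(G_\tau)$, so that $f \st e_\tau = f$ for every $f \in L^1(G_\tau)$. Applying Theorem \ref{31} to this identity I get
\begin{equation*}
\widetilde{f} \ast \widetilde{e_\tau}(k) = \widetilde{f \st e_\tau}(k) = \widetilde{f}(k)
\end{equation*}
for a.e.\ $k \in K$. Next, since Theorem \ref{31} also asserts that $S_K^\tau$ maps $L^1_\tau(G_\tau)$ \emph{onto} $L^1(K)$, every $\psi \in L^1(K)$ is of the form $\widetilde{f}$ for some $f \in L^1(G_\tau)$, hence $\psi \ast \widetilde{e_\tau} = \psi$ for every $\psi \in L^1(K)$. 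An analogous computation using $e_\tau \st f = f$ yields $\widetilde{e_\tau} \ast \psi = \psi$, so $\widetilde{e_\tau}$ is a two-sided identity for the standard convolution on $L^1(K)$.

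Finally, I would invoke the classical result (Theorem 19.19 and Theorem 19.20 of \cite{HR1}) that $L^1(K)$ possesses an identity if and only if the locally compact group $K$ is discrete, and conclude that $K$ is discrete. There is no real obstacle in this corollary; the only step that deserves care is to note that the surjectivity of $S_K^\tau$ (rather than just its multiplicativity) is exactly what is needed to promote the identity property from the image of $f$ to arbitrary elements of $L^1(K)$.
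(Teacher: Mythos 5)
Your argument is correct and is essentially the paper's own route: the corollary is obtained exactly as in the proposition immediately preceding it, by pushing the identity $e_\tau$ through the surjective $*$-homomorphism $S_K^\tau$ of Theorem \ref{31} (so that $\widetilde{f}\ast\widetilde{e_\tau}=\widetilde{f\st e_\tau}=\widetilde{f}$ and, by surjectivity, $\widetilde{e_\tau}$ is an identity for $L^1(K)$) and then invoking Theorems 19.19 and 19.20 of \cite{HR1}. Your remark that surjectivity is the step that upgrades the identity property to all of $L^1(K)$ matches the paper's use of it.
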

In the next proposition we show that the map $S_K^\tau:L^1_\tau(G_\tau)\to L^1(K)$ given by $S_K^\tau(f)=\widetilde{f} $ is not injective in the general settings. More precisely we prove that $\mathcal{J}_\tau^1=\{0\}$ if and only if $H$ be the trivial group.
\begin{proposition}\label{33.1}
{\it Let $\tau:H\to Aut(K)$ be a continuous homomorphism and $G_\tau=H\ltimes_\tau K$. The linear map $S_K^\tau:L^1_\tau(G_\tau)\to L^1(K)$ is injective if and only if $H$ is trivial group.}
\end{proposition}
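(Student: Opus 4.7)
The strategy is to prove the two directions separately; the tensor-product construction does most of the work.

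First I would prove the easy direction that if $H=\{e_H\}$ is trivial, then $S_K^\tau$ is injective. In this case the homomorphism $\delta:H\to(0,\infty)$ satisfies $\delta(e_H)=1$, the Haar measure $d\mu_{G_\tau}$ on $G_\tau$ is just the product of counting measure at $e_H$ with $dk$, and the natural identification $G_\tau\cong K$ yields an isometric isomorphism $L^1(G_\tau)\cong L^1(K)$. Under this identification, the formula (\ref{9}) reduces to $\widetilde{f}(k)=f(e_H,k)\delta(e_H)=f(e_H,k)$, so $S_K^\tau$ is (up to the above identification) the identity map of $L^1(K)$, and in particular injective.

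For the converse, I would argue by contrapositive: assuming $H$ is nontrivial, I exhibit a nonzero $f\in L^1_\tau(G_\tau)$ with $\widetilde{f}=0$. The idea is to use a separated-variables element $f(h,k)=\phi(h)\psi(k)$, for then
\begin{equation*}
\widetilde{f}(k)=\psi(k)\int_H\phi(t)\delta(t)\,dt,
\end{equation*}
so it suffices to produce $\phi\in L^1(H,\delta(h)dh)$ with $\phi\neq 0$ yet $\int_H\phi(t)\delta(t)\,dt=0$, together with any nonzero $\psi\in L^1(K)$. Since $H$ is a nontrivial Hausdorff group, it contains at least two distinct points, so by local compactness and Urysohn's lemma I can pick two nonzero, nonnegative functions $\phi_1,\phi_2\in\mathcal{C}_c(H)$ with disjoint supports. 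Both integrals $c_i:=\int_H\phi_i(t)\delta(t)\,dt$ are strictly positive (since $\delta>0$ and $\phi_i\geq 0$ is nonzero continuous), and setting
\begin{equation*}
\phi:=c_2\,\phi_1-c_1\,\phi_2
\end{equation*}
gives a nonzero compactly supported continuous function on $H$ with $\int_H\phi(t)\delta(t)\,dt=0$. Taking any $\psi\in\mathcal{C}_c(K)\setminus\{0\}$ and $f:=\phi\otimes\psi$ then yields a nonzero element of $L^1(G_\tau)$ (its $L^1$-norm equals $\|\phi\|_{L^1(H,\delta dh)}\|\psi\|_{L^1(K)}>0$) with $\widetilde{f}\equiv 0$, hence $f\in\ker S_K^\tau\setminus\{0\}$.

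The only mild subtlety is ensuring the two supports can be chosen disjoint and nonempty; this is automatic since a nontrivial locally compact Hausdorff group admits disjoint open neighborhoods around any two distinct points, and each such open set carries a nonzero, nonnegative, compactly supported continuous bump. Once this separation is arranged, the bilinear nature of $\widetilde{\,\cdot\,}$ in the variables $h$ and $k$ immediately forces $\widetilde{f}=0$, completing the argument.
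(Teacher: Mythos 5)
Your proposal is correct and takes essentially the same approach as the paper: the paper's kernel elements are the differences $\psi_\varphi-\psi_{\varphi'}$ with $\psi_\varphi(h,k)=\delta(h^{-1})\varphi(h)\psi(k)$ and $\varphi\neq\varphi'$ normalized in $L^1(H)$, which are exactly separated-variables functions $\phi\otimes\psi$ with $\int_H\phi(t)\delta(t)\,dt=0$, i.e.\ what you construct directly from two disjoint bump functions. The easy direction is handled the same way, so there is nothing to add.
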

\begin{proof}
If $H$ be the trivial group then clearly $\widetilde{f}=0$ implies $f=0$ and so the linear map $S_K^\tau$ is injective.
To prove the converse note that for each nonnegative function $\varphi\in L^1(H)$ with $\|\varphi\|_{L^1(H)}=1$ and also for each $\psi\in L^1(K)$ we can define ${\psi}_\varphi(h,k)=\delta(h^{-1})\varphi(h)\psi(k)$ for a.e $(h,k)\in G_\tau$. Then ${\psi}_\varphi$ belongs to $L^1_\tau(G_\tau)$ and also for a.e. $k\in K$ we have
$\widetilde{{\psi}_\varphi}(k)=\psi(k)$.
Now if $S_K^\tau$ is injective and also $H$ is not the trivial group then for a fixed $\psi\in L^1(K)$ we have
$$\{0\}\subset\{\psi_\varphi-\psi_{\varphi'}:\varphi,\varphi'\in \mathcal{C}_c^+(H), \|\varphi\|_{L^1(H)}=\|\varphi'\|_{L^1(H)}=1,\varphi\not=\varphi'\}\subseteq \mathcal{J}_\tau^1,$$
which contradicts injectivity of the linear map $S_K^\tau$.
\end{proof}
As an applications of Proposition \ref{33.1} we can prove that $L^1_{\tau}(G_\tau)$ is an associative Banach $*$-algebra if and only if $H$ is trivial group.

\begin{corollary}
{\it Let $K$ be seconde countable and $\tau:H\to Aut(K)$ be a continuous homomorphism and also let $G_\tau=H\ltimes_\tau K$. The $\tau$-convolution defined in (\ref{11}) is associative if and only if $H$ is the trivial group.}
\end{corollary}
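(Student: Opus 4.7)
The plan is to reduce the associativity question to a concrete identity via Proposition \ref{11.2}, and then exhibit explicit counterexamples using the family of test functions built in the proof of Proposition \ref{33.1}.

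First, by Proposition \ref{11.2} the difference $(f\st g)\st u - f\st(g\st u)$ equals $f\rst g\rst u - f\lst g\lst u$, so $\st$ is associative if and only if $f\rst g\rst u = f\lst g\lst u$ for every $f,g,u\in L^1(G_\tau)$. Using the pointwise identities $f\rst g(h,k) = f_h\ast\widetilde g(k)$ and $f\lst g(h,k) = \widetilde f \ast g_h(k)$, together with the $\ast$-homomorphism property $\widetilde{f\lst g}=\widetilde f\ast\widetilde g$ from Theorem \ref{31}, I would rewrite this as
\[
(f_h\ast\widetilde g)\ast\widetilde u(k) \;=\; (\widetilde f\ast\widetilde g)\ast u_h(k) \qquad \text{for a.e. } (h,k)\in G_\tau.
\]

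The easy direction is that when $H=\{e_H\}$ we have $\delta\equiv 1$ and the map $f\mapsto\widetilde f$ identifies $L^1(G_\tau)$ with $L^1(K)$, sending $f_h$ and $\widetilde f$ both to $f$; the displayed equation reduces to the associativity of the ordinary convolution on $L^1(K)$.

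For the converse, assuming $H$ is nontrivial I would import the construction from Proposition \ref{33.1}. For $\varphi\in\mathcal{C}_c^+(H)$ with $\|\varphi\|_{L^1(H)}=1$ and $\psi\in L^1(K)$, set $\psi_\varphi(h,k)=\delta(h^{-1})\varphi(h)\psi(k)$, so that $(\psi_\varphi)_h=\delta(h^{-1})\varphi(h)\psi$ and $\widetilde{\psi_\varphi}=\psi$. Taking $f=\psi_{\varphi_1}$, $g=\psi_{\varphi_0}$, $u=\psi_{\varphi_2}$ for a single $\psi$ and auxiliary $\varphi_0$, the displayed identity collapses to
\[
\delta(h^{-1})\varphi_1(h)\,(\psi\ast\psi\ast\psi)(k) \;=\; \delta(h^{-1})\varphi_2(h)\,(\psi\ast\psi\ast\psi)(k).
\]
Choosing $\psi\ge 0$ with $\int_K\psi\,dk>0$ makes $\psi\ast\psi\ast\psi$ a nonnegative function of positive integral, hence nonzero in $L^1(K)$, so the equation forces $\varphi_1=\varphi_2$ almost everywhere on $H$.

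The remaining obstacle is purely topological: since $H$ is nontrivial and locally compact Hausdorff, two distinct points of $H$ admit disjoint, relatively compact open neighborhoods, and an application of Urysohn's lemma produces two functions $\varphi_1\ne\varphi_2$ in $\mathcal{C}_c^+(H)$ of unit $L^1(H)$-norm, yielding the desired contradiction. The main conceptual step is the reduction to the single identity above, cleanly separating the ``slice'' information $f_h$, $u_h$ from the ``averaged'' information $\widetilde f$, $\widetilde u$; once this reduction is in place, the counterexample mechanism of Proposition \ref{33.1} transfers essentially verbatim. The second countability assumption on $K$ is used only to legitimize the pointwise ``for a.e. $(h,k)$'' comparisons via Fubini, ensuring that two $L^1$ functions that disagree on a set of positive product measure can be detected through such slicewise identities.
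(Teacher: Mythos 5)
Your argument is correct, but in the nontrivial direction it takes a genuinely different route from the paper's. The paper assumes associativity, takes $f$ with $\widetilde f=0$, and annihilates it through a double limit over a \emph{sequential} approximate identity $\{\psi_n\}$ of $L^1(K)$ transported by $\lambda_\Phi$ (this is precisely where second countability of $K$ enters), concluding $\mathcal{J}_\tau^1=\{0\}$ and then invoking Proposition \ref{33.1}. You instead reduce associativity, via Proposition \ref{11.2} together with the slice formulas and Theorem \ref{31}, to the single identity $(f_h\ast\widetilde g)\ast\widetilde u=(\widetilde f\ast\widetilde g)\ast u_h$ a.e.\ for all triples, and refute it directly when $H$ is nontrivial by plugging in the explicit test functions $\psi_\varphi$ from the proof of Proposition \ref{33.1}, forcing $\varphi_1=\varphi_2$ a.e.\ and contradicting a Urysohn-type choice of two distinct normalized functions in $\mathcal{C}_c^+(H)$. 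What your route buys: it avoids approximate identities altogether, so it proves the corollary with no countability hypothesis on $K$ at all; in fact your closing remark is the only inaccurate point, since second countability is not needed even for the a.e.\ comparison --- for your test functions the difference of the two sides is the product $\delta(h)^{-1}\left(\varphi_1(h)-\varphi_2(h)\right)\,(\psi\ast\psi\ast\psi)(k)$, whose $L^1(G_\tau)$-norm factors as $\|\varphi_1-\varphi_2\|_{L^1(H)}\|\psi\ast\psi\ast\psi\|_{L^1(K)}$, so its vanishing in $L^1(G_\tau)$ immediately gives $\varphi_1=\varphi_2$ a.e. What the paper's route buys is economy of means: the same approximate-identity mechanism is reused for Corollary \ref{33.2} and Theorem \ref{36}. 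One small caution: Proposition \ref{11.2} as printed is off by a constant (the correct difference is $\tfrac{1}{4}\left(f\lst g\lst u-f\rst g\rst u\right)$), but this does not affect the equivalence you actually use, namely that $\st$ is associative if and only if $f\rst g\rst u=f\lst g\lst u$ for all $f,g,u$.
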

\begin{proof}
Obviously when $H$ is the trivial group, the $\tau$-convolution is associative. Conversely suppose the $\tau$-convolution is associative and $\widetilde{f}=0$. Let $\{\psi_n\}_{n=1}^\infty$ be a sequence approximate identity for $L^1(K)$ according to Proposition 2.42 of \cite{FollH} and $\Phi\in L^+_1$. Using associativity of the $\tau$-convolution for a.e. $(h,k)\in G_\tau$ we have
\begin{align*}
f(h,k)&=\lim_nf_h\ast\psi_n(k)
\\&=\lim_n\lim_mf_h\ast(\psi_n\ast\psi_m)(k)
\\&=\lim_n\lim_mf_h\ast\left(\widetilde{\Phi(\psi_n)}\ast\widetilde{\Phi(\psi_m)}\right)(k)
\\&=\lim_n\lim_m f\rst\Phi(\psi_n)\rst\Phi(\psi_m)(h,k)
\\&=\lim_n\lim_m f\lst\Phi(\psi_n)\lst\Phi(\psi_m)(h,k)
\\&=\lim_n\lim_m \widetilde{f}\ast\widetilde{\Phi(\psi_n)}\ast(\psi_m)_h(k)=0.
\end{align*}
Now proposition \ref{33.1} guarantee that $H$ is the trivial group.
\end{proof}


Next corollary shows that when $K$ is second countable, the right $\tau$-convolution is commutative if and only if $K$ is abelian and $H$ is the trivial group.


\begin{corollary}\label{33.2}
{\it Let $K$ be seconde countable and $\tau:H\to Aut(K)$ be a continuous homomorphism and also let $G_\tau=H\ltimes_\tau K$. The right $\tau$-convolution  is commutative if and only if $K$ is abelian and $H$ is the trivial group.}
\end{corollary}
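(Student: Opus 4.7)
\medskip

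The plan is to handle the trivial direction directly and then exploit the homomorphism $S_K^\tau$ plus the embedding constructed in the proof of Proposition \ref{33.1} for the harder direction.

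First, the easy direction: if $H=\{e_H\}$ then $G_\tau$ can be identified with $K$, $\delta\equiv 1$, and for any $f,g\in L^1(G_\tau)\simeq L^1(K)$ the formula $f\rst g(h,k)=f_h\ast\widetilde{g}(k)$ collapses to the standard convolution on $L^1(K)$. This is commutative iff $K$ is abelian, which handles $\Leftarrow$.

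For $\Rightarrow$, suppose $\rst$ is commutative. I would first apply Theorem \ref{31} (or rather its proof for $\rst$, which gives $\widetilde{f\rst g}=\widetilde{f}\ast\widetilde{g}$). Commutativity of $\rst$ then yields $\widetilde{f}\ast\widetilde{g}=\widetilde{g}\ast\widetilde{f}$ for all $f,g\in L^1(G_\tau)$. Since $S_K^\tau$ is surjective onto $L^1(K)$, this means $L^1(K)$ is commutative under standard convolution, and hence $K$ is abelian.

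Now I would use the embedding $\psi\mapsto\psi_\varphi$ from Proposition \ref{33.1}: for nonnegative $\varphi\in L^1(H)$ with $\|\varphi\|_{L^1(H)}=1$ and $\psi\in L^1(K)$, set $\psi_\varphi(h,k):=\delta(h^{-1})\varphi(h)\psi(k)$. Then $(\psi_\varphi)_h(k)=\delta(h^{-1})\varphi(h)\psi(k)$ and $\widetilde{\psi_\varphi}=\psi$, so a direct computation gives, for a.e.\ $(h,k)\in G_\tau$,
\begin{equation*}
\psi_\varphi\rst\psi'_{\varphi'}(h,k)=\delta(h^{-1})\varphi(h)\,(\psi\ast\psi')(k),\qquad \psi'_{\varphi'}\rst\psi_\varphi(h,k)=\delta(h^{-1})\varphi'(h)\,(\psi'\ast\psi)(k).
\end{equation*}
Since $K$ is abelian, $\psi\ast\psi'=\psi'\ast\psi$, so commutativity of $\rst$ forces
\begin{equation*}
\bigl(\varphi(h)-\varphi'(h)\bigr)\,(\psi\ast\psi')(k)=0\quad\text{for a.e.\ }(h,k)\in G_\tau.
\end{equation*}
Picking $\psi,\psi'\in L^1(K)$ nonnegative and not identically zero makes $\psi\ast\psi'$ nonzero on a set of positive Haar measure in $K$, and therefore $\varphi=\varphi'$ a.e.\ on $H$.

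The final step is the contradiction: if $H$ were nontrivial, second countability would let me produce two distinct nonnegative $\varphi,\varphi'\in\mathcal{C}_c^+(H)$ with unit $L^1$-norm (indeed $L^1(H)$ is infinite-dimensional whenever $H\neq\{e_H\}$), which violates $\varphi=\varphi'$ a.e. Hence $H=\{e_H\}$. I expect the main (minor) obstacle to be selecting $\psi,\psi'$ carefully so that $\psi\ast\psi'$ is nonzero on a large enough set to cancel throughout, but nonnegativity of compactly supported bumps on $K$ handles it.
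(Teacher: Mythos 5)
Your proposal is correct, and its second half takes a genuinely different route from the paper's. Both arguments get $K$ abelian the same way, by pushing commutativity of $\rst$ through the surjective homomorphism $f\mapsto\widetilde{f}$ of Theorem \ref{31}. For the triviality of $H$, however, the paper shows that commutativity forces $\mathcal{J}_\tau^1=\{0\}$: it takes $f$ with $\widetilde{f}=0$, writes $f(h,k)=\lim_n f_h\ast\psi_n(k)$ for a sequential approximate identity $\{\psi_n\}$ of $L^1(K)$ (this is exactly where second countability of $K$ enters), uses the commutativity identity $f_h\ast\widetilde{g}=g_h\ast\widetilde{f}$ with $g=\Phi(\psi_n)$ to conclude $f=0$, and then invokes Proposition \ref{33.1} to get $H=\{e_H\}$. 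You instead test commutativity directly on the embedded functions $\psi_\varphi(h,k)=\delta(h^{-1})\varphi(h)\psi(k)$ borrowed from the proof of Proposition \ref{33.1}, compute both sides via $f\rst g(h,k)=f_h\ast\widetilde{g}(k)$, and read off $\varphi=\varphi'$ a.e.\ for all normalized bumps, contradicting nontriviality of $H$. Your route is more elementary (no approximate identities, no passage through $\mathcal{J}_\tau^1$) and in fact shows that the second-countability hypothesis on $K$ is not needed for this corollary, whereas the paper's route recycles its injectivity criterion and the approximate-identity technique reused in Theorem \ref{36}. Two small repairs to your final step: the parenthetical claim that $L^1(H)$ is infinite dimensional whenever $H\neq\{e_H\}$ is false for finite nontrivial $H$, and second countability (assumed for $K$, not $H$) is irrelevant there; what you actually need, namely two functions $\varphi,\varphi'\in\mathcal{C}_c^+(H)$ with unit $L^1$-norm that are not a.e.\ equal, is obtained by taking a bump $\varphi$ supported in a small neighborhood $U$ of $e_H$ and its left translate by some $h_0\neq e_H$ with $h_0U\cap U=\emptyset$, so the two functions have disjoint supports and differ on a set of positive Haar measure.
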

\begin{proof}
Clearly when $H$ is the trivial group the right $\tau$-convolution coincides with the standard convolution on $L^1(K)$ and so if $K$ is abelian the left $\tau$-convolution is commutative. Now let right $\tau$-convolution be commutative and also $f,g\in L^1_{\tau_r}(G_\tau)$. Using Proposition \ref{31} for a.e. $k\in K$ we have
\begin{align*}
\widetilde{f}\ast\widetilde{g}(k)=\widetilde{f\rst g}(k)=\widetilde{g\rst f}(k)=\widetilde{g}\ast\widetilde{f}(k).
\end{align*}
Now since $\{\widetilde{f}:f\in L^1_{\tau_r}(G_\tau)\}=L^1(K)$ we get that $L^1(K)$ is commutative and so $K$ is abelian. Also since the right $\tau$-convolution is commutative, using (\ref{13}) for each $f,g\in L^1_{\tau_r}(G_\tau)$ and a.e. $(h,k)\in G_\tau$ we have $f_h\ast\widetilde{g}(k)=g_h\ast \widetilde{f}(k)$. To show that $H$ is the trivial group, assume that $\{\psi_n\}_{n=1}^\infty$ is an approximate identity for $L^1(K)$ and also $\widetilde{f}=0$. Then if $\Phi\in L^+_1$ for a.e. $(h,k)\in G_\tau$ we have
\begin{align*}
f(h,k)&=\lim_nf_h\ast\psi_n(k)
\\&=\lim_nf_h\ast\widetilde{\Phi(\psi_n)}(k)
\\&=\lim_n\Phi(\psi_n)_h\ast \widetilde{f}(k)=0.
\end{align*}
\end{proof}
The same result for the left $\tau$-convolution can be obtained by the similar argument.
In the next theorem we show that $\tau$-convolution is commutative whenever $K$ is abelian and also we prove that the converse is true.
\begin{theorem}\label{34}
{\it Let $\tau:H\to Aut(K)$ be a continuous homomorphism and $G_\tau=H\ltimes_\tau K$. The $\tau$-convolution is commutative if and only if $K$ is abelian.}
\end{theorem}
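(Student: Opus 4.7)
The plan is to handle the two directions separately, with both relying on the explicit formula \eqref{13} for $f\st g$.

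For the ``if'' direction, assume $K$ is abelian, so that the standard convolution on $L^1(K)$ is commutative. Reading off from \eqref{13} we have
\begin{align*}
f\st g(h,k) &= 2^{-1}\left(f_h\ast\widetilde{g}(k) + \widetilde{f}\ast g_h(k)\right),\\
g\st f(h,k) &= 2^{-1}\left(g_h\ast\widetilde{f}(k) + \widetilde{g}\ast f_h(k)\right).
\end{align*}
Commutativity of $\ast$ on $L^1(K)$ gives $f_h\ast\widetilde{g}=\widetilde{g}\ast f_h$ and $\widetilde{f}\ast g_h=g_h\ast\widetilde{f}$, so the two expressions above agree for a.e.\ $(h,k)\in G_\tau$, hence $f\st g=g\st f$.

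For the ``only if'' direction, the main tool is Theorem \ref{31}, which tells us that the map $S_K^\tau:L^1_\tau(G_\tau)\to L^1(K)$, $f\mapsto \widetilde{f}$, is a surjective $*$-homomorphism. Assuming $\st$ is commutative, we get for every $f,g\in L^1_\tau(G_\tau)$
\[
\widetilde{f}\ast\widetilde{g} \;=\; \widetilde{f\st g} \;=\; \widetilde{g\st f} \;=\; \widetilde{g}\ast\widetilde{f}.
\]
Now, given any $\psi,\phi\in L^1(K)$, surjectivity of $S_K^\tau$ furnishes $f,g\in L^1_\tau(G_\tau)$ with $\widetilde{f}=\psi$ and $\widetilde{g}=\phi$, so $\psi\ast\phi=\phi\ast\psi$. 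Thus $L^1(K)$ is a commutative Banach algebra under standard convolution, which is well known (and recalled in the Preliminaries) to force $K$ to be abelian.

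The proof is essentially a direct computation once the structural facts are in hand, so I do not expect any serious obstacle; the only thing to be careful about is invoking surjectivity of $S_K^\tau$ at the right step, since without it commutativity of $\widetilde{f}\ast\widetilde{g}$ on the image of $S_K^\tau$ would not immediately yield commutativity on all of $L^1(K)$. Surjectivity was already established in Theorem \ref{31} (concretely, any $\psi\in L^1(K)$ is realized as $\widetilde{\Phi(\psi)}$ for any $\Phi\in L_1^+$), so the argument goes through cleanly.
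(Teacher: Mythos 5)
Your proof is correct and follows essentially the same route as the paper: the ``if'' direction is the same direct computation using commutativity of the standard convolution on $L^1(K)$ (you use the rewritten form \eqref{13}, the paper swaps factors inside the defining integral), and the ``only if'' direction is exactly the argument the paper invokes from Corollary \ref{33.2}, pushing commutativity through the surjective homomorphism $S_K^\tau$ of Theorem \ref{31}. No gaps.
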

\begin{proof}
When the $\tau$-convolution is commutative, similar method as we use in Corollary \ref{33.2} works and implies that $K$ is abelian.
Now let $K$ be an abelian group. Thus the Banach $*$-algebra $L^1(K)$ is commutative and so according to the definition of the $\tau$-convolution, for each $f,g\in L^1_\tau(G_\tau)$ and also for a.e. $(h,k)\in G_\tau$ we have
\begin{align*}
f\st g(h,k)
&=2^{-1}\int_H(f_h\ast g_t(k)+f_t\ast g_h(k))\delta(t)dt
\\&=2^{-1}\int_H(g_t\ast f_h(k)+g_h\ast f_t(k))\delta(t)dt=g\st f(h,k).
\end{align*}
\end{proof}
As an immediate consequence of Theorem \ref{34} we show that when $K$ is abelian the $\tau$-convolution and $\tau$-involution makes $L^1(G_\tau)$ into a Jordan Banach $*$-algebra.
\begin{corollary}
{\it Let $\tau:H\to Aut(K)$ be a continuous homomorphism and $G_\tau=H\ltimes_\tau K$ with $K$ abelian. $L^1(G_\tau)$ with respect to the $\tau$-convolution and $\tau$-involution is a Jordan Banach $*$-algebra.}
\end{corollary}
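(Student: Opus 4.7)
The plan is to verify the Jordan identity $(f\st g)\st f^2 = f\st(g\st f^2)$ for all $f,g\in L^1(G_\tau)$, where $f^2:=f\st f$. Together with commutativity of $\st$ (which, by Theorem \ref{34}, is automatic once $K$ is abelian) and the non-associative Banach $*$-algebra structure already supplied by Theorem \ref{11.3}, this is exactly what is needed to upgrade $L^1(G_\tau)$ to a Jordan Banach $*$-algebra.

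The first step is to apply Proposition \ref{11.2} with $u:=f^2$. Using associativity of $\rst$ and $\lst$ from Theorems \ref{11.1} and \ref{11.1.2}, the identity there reduces the vanishing of the associator $(f\st g)\st u - f\st(g\st u)$ to showing $(f\rst g)\rst u = (f\lst g)\lst u$ in the special case $u=f\st f$.

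To check this last equality, I would unfold both sides using the pointwise formulas $(f\rst g)(h,k)=f_h\ast\widetilde{g}(k)$ and $(f\lst g)(h,k)=\widetilde{f}\ast g_h(k)$, combined with the multiplicativity $\widetilde{f\st g}=\widetilde{f}\ast\widetilde{g}$ from Theorem \ref{31}. Since $K$ is abelian, the definition of $\st$ collapses to $u_h(k)=f_h\ast\widetilde{f}(k)$ and $\widetilde{u}=\widetilde{f}\ast\widetilde{f}$, which yields
\begin{align*}
((f\rst g)\rst u)(h,k)&=f_h\ast\widetilde{g}\ast\widetilde{f}\ast\widetilde{f}(k),\\
((f\lst g)\lst u)(h,k)&=\widetilde{f}\ast\widetilde{g}\ast f_h\ast\widetilde{f}(k).
\end{align*}
The only real nuisance is keeping these four-fold convolutions straight, but since convolution on $L^1(K)$ is commutative the two expressions coincide, so the associator vanishes and the Jordan identity follows.
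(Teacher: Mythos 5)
Your proposal is correct and follows essentially the same route as the paper: commutativity from Theorem \ref{34}, the $*$-algebra and norm structure from Theorem \ref{11.3}, and the Jordan identity reduced via Proposition \ref{11.2} (with $u=f\st f$) to the equality $(f\rst g)\rst u=(f\lst g)\lst u$, which both you and the paper verify by expanding pointwise into iterated convolutions on $L^1(K)$ and invoking commutativity and associativity there.
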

\begin{proof}
If $K$ is abelian, Theorem \ref{34} guarantee that $\tau$-convolution is commutative. According to Theorem \ref{11.3} it is sufficient to show that the $\tau$-convolution satisfies the Jordan identity. Now let $f,g\in L^1(G_\tau)$. Using Proposition \ref{11.2} and also commutativity and associativity of the standard convolution on $L^1(K)$ for a.e $(h,k)\in G_\tau$ we have
\begin{align*}
(f\st g)\st (f\st f)(h,k)-f\st\left(g\st (f\st f)\right)(h,k)
&=(f\rst g)\rst (f\rst f)(h,k)-f\lst\left(g\lst (f\lst f)\right)(h,k)
\\&=(f\rst g)_h\ast\widetilde{(f\rst f)}(k)-\widetilde{f}\ast\left(g\lst (f\lst f)\right)_h(k)
\\&=(f\rst g)_h\ast\left(\widetilde{f}\ast\widetilde{f}\right)(k)-\widetilde{f}\ast\left(\widetilde{g}\ast(f\lst f)_h\right)(k)
\\&=(f_h\ast\widetilde{ g})\ast\left(\widetilde{f}\ast\widetilde{f}\right)(k)-\widetilde{f}\ast\left(\widetilde{g}\ast(\widetilde{f}\ast f_h)\right)(k)=0.
\end{align*}
\end{proof}
\begin{remark}\label{35}
The standard convolution on $L^1(G_\tau)$ is commutative if and only if $G_\tau$ is abelian. Theorem \ref{34} guarantee that the $\tau$-convolution has some differences with standard convolution which can be defined on the function algebra $L^1(G)$ for each arbitrary locally compact group $G$.  But in the sequel we show
that $\tau$-convolution on $L^1(G_\tau)$ coincides with the standard convolution of $L^1(G_\tau)$ only when $H$ is the trivial group which implies that the $\tau$-convolution on $L^1(G_\tau)$ is completely different form the standard convolution of $L^1(G_\tau)$.
\end{remark}
Let $\mathcal{A}$ be a closed $*$-subalgebra of $L^1_\tau(G_\tau)$. A sequence $\{u_n\}_{n=1}^\infty$ in $\mathcal{A}$ is called a $\tau$-sequence approximate identity for the closed $*$-subalgebra $\mathcal{A}$ if for each $f\in \mathcal{A}$ we have
\begin{equation}\label{}
\lim_{n}\|u_n\st f-f\|_{L^1(G_\tau)}=\lim_{n}\|f\st u_n -f\|_{L^1(G_\tau)}=0.
\end{equation}
Next theorem shows us that the non-associative Banach $*$-algebra $L^1_\tau(G_\tau)$ has a $\tau$-sequence approximate identity if and only if $H$ is the trivial group.
\begin{theorem}\label{36}
Let $\tau:H\to Aut(K)$ be a continuous homomorphism and $G_\tau=H\ltimes_\tau K$. The $\tau$-convolution admits a $\tau$-sequence approximate identity if and only if $H$ be the trivial group.
\end{theorem}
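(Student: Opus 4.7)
My plan is: for the easy direction, observe that when $H=\{e_H\}$ the $\tau$-convolution reduces to the standard convolution on $L^1(K)$ and apply the classical approximate-identity construction (Proposition~2.42 of \cite{FollH}, whose sequential version exists under the usual countability hypotheses on $K$); for the harder direction, reduce to Proposition~\ref{33.1} by showing that any $\tau$-sequence approximate identity forces the kernel $\mc{J}_\tau^1$ of $S_K^\tau$ to be trivial.

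Assume $\{u_n\}_{n=1}^\infty\subset L^1(G_\tau)$ is a $\tau$-sequence approximate identity. The first step is to push the approximate-identity property through the surjective $*$-homomorphism $S_K^\tau$ of Theorem~\ref{31}. For each $\psi\in L^1(K)$, pick $f\in L^1(G_\tau)$ with $\widetilde{f}=\psi$ and use the norm-decreasing property together with $\widetilde{u_n\st f}=\widetilde{u_n}\ast\widetilde{f}$ to write
\begin{equation*}
\|\widetilde{u_n}\ast\psi-\psi\|_{L^1(K)} = \|\widetilde{u_n\st f - f}\|_{L^1(K)} \le \|u_n\st f - f\|_{L^1(G_\tau)}\to 0,
\end{equation*}
and symmetrically $\|\psi\ast\widetilde{u_n}-\psi\|_{L^1(K)}\to 0$. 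Hence $\{\widetilde{u_n}\}$ is a sequence approximate identity for $L^1(K)$.

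The second step applies this to an arbitrary $f\in\mc{J}_\tau^1$. Since $\widetilde{f}=0$, formula (\ref{13}) collapses to
\begin{equation*}
u_n\st f(h,k) = \tfrac{1}{2}\,\widetilde{u_n}\ast f_h(k)\quad\text{for a.e. }(h,k)\in G_\tau.
\end{equation*}
From $u_n\st f\to f$ in $L^1(G_\tau)$, Fubini gives $\int_H\|(u_n\st f)_h - f_h\|_{L^1(K)}\delta(h)\,dh\to 0$, so after passing to a subsequence the integrand tends to $0$ for a.e. $h\in H$. For each such $h$ with $f_h\in L^1(K)$, the first step yields $\widetilde{u_n}\ast f_h\to f_h$ in $L^1(K)$. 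Comparing the two limits in the displayed identity forces $f_h=\tfrac12 f_h$, i.e.\ $f_h=0$ for a.e.\ $h$, so $f=0$. Thus $\mc{J}_\tau^1=\{0\}$, and Proposition~\ref{33.1} now forces $H$ to be the trivial group.

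The delicate point I anticipate is the simultaneous subsequence extraction in the second step: one needs the pointwise-in-$h$ convergences $(u_n\st f)_h\to f_h$ and $\widetilde{u_n}\ast f_h\to f_h$ in $L^1(K)$ to hold on the same full-measure set of $h$. The second is automatic for every $h$ at which $f_h\in L^1(K)$ once step one is in hand, so a single Fubini-based subsequence argument suffices; nevertheless this is the only nontrivial manoeuvre, and the rest of the proof is bookkeeping built on Theorems~\ref{31} and Proposition~\ref{33.1}.
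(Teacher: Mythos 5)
Your proposal is correct and follows essentially the same route as the paper's proof: push the $\tau$-sequence approximate identity through $S_K^\tau$ to obtain an approximate identity for $L^1(K)$, use formula (\ref{13}) with $\widetilde{f}=0$ to force $f_h=\tfrac12 f_h$ for a.e.\ $h$, conclude $\mathcal{J}_\tau^1=\{0\}$, and invoke Proposition \ref{33.1}. If anything, you are more careful than the paper, which passes from $L^1(G_\tau)$-convergence to pointwise a.e.\ limits without the Fubini-plus-subsequence justification you supply.
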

\begin{proof}
If $H$ is the trivial group $\{e\}$ then for each continuous homomorphism $\tau:H\to Aut(K)$ we have
$$G_\tau=\{e\}\ltimes_\tau K=K.$$
which implies $L^1(G_\tau)=L^1(K)$. Thus the $\tau$-convolution coincides with the standard convolution of $L^1(K)$ and so any standard approximate identity for $L^1(K)$ is a $\tau$-sequence approximate identity for $L^1(G_\tau)$. Conversely assume that the $\tau$-convolution admits a $\tau$-sequence approximate identity $\{u_n\}_{n=1}^\infty$. Let $f\in\mathcal{J}^1_\tau$, we show that $f(h,k)=0$ for a.e. $(h,k)\in G_\tau$.
By Theorem \ref{31}, $\{\widetilde{u_n}\}_{n=1}^\infty$ is a sequence of approximate identity for $L^1(K)$. Using (\ref{13}) for a.e. $(h,k)\in G_\tau$ we have
\begin{align*}
f(h,k)&=\lim_{n}f\st u_n(h,k)
\\&=2^{-1}\lim_{n}\left(f_h\ast\widetilde{u_n}(k)+\widetilde{f}\ast (u_n)_h(k)\right)
\\&=2^{-1}\lim_{n}f_h\ast\widetilde{u_n}(k)=2^{-1}f(h,k).
\end{align*}
Thus we get $f(h,k)=0$ for a.e. $(h,k)\in G_\tau$ and so we have $\mathcal{J}^1_\tau=\{0\}$.
Now Proposition \ref{33.1} works and implies that $H$ is the trivial group.
\end{proof}
As an immediate consequence of Theorem \ref{36} we achieve that when $H$ and $K$ are second countable locally compact groups the $\tau$-convolution defined coincides with the standard convolution of $L^1(G_\tau)$ if and only if $H$ is the trivial group. Which complectly guarantee that our extension is worthwhile and also not trivial.
\begin{corollary}\label{37}
{\it Let $H$ and $K$ be second countable locally compact groups and $\tau:H\to Aut(K)$ be a continuous homomorphism and also let $G_\tau=H\ltimes_\tau K$. The $\tau$-convolution defined in (\ref{11}) coincides with the standard convolution of $L^1(G_\tau)$ if and only if $H$ is the trivial group.}
\end{corollary}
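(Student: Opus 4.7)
The plan is to leverage Theorem \ref{36} as a black box: that theorem converts the question about convolutions into the existence of a $\tau$-sequence approximate identity, which is exactly what the standard convolution supplies for free when $G_\tau$ is second countable.

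For the easy direction, if $H=\{e_H\}$ then $G_\tau=H\ltimes_\tau K$ collapses to $K$, the left Haar measure $\delta(h)dhdk$ reduces to $dk$, and a glance at formulas (\ref{10.r}), (\ref{10.l}), (\ref{11}) shows that $f\st g(h,k)$ and $f\ast g(h,k)$ both reduce to the ordinary convolution of $f(e_H,\cdot)$ and $g(e_H,\cdot)$ on $L^1(K)$. So the two convolutions agree.

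For the converse, I would assume the $\tau$-convolution coincides with the standard convolution on $L^1(G_\tau)$. Since $H$ and $K$ are second countable, so is the semi-direct product $G_\tau=H\ltimes_\tau K$ (the underlying space is $H\times K$ with the product topology). A second countable locally compact group has a countable neighborhood base at the identity, so the construction in Proposition 2.42 of \cite{FollH} produces a \emph{sequence} $\{u_n\}_{n=1}^{\infty}\sset L^1(G_\tau)$ which is a standard approximate identity:
\begin{equation*}
\lim_n\|u_n\ast f-f\|_{L^1(G_\tau)}=\lim_n\|f\ast u_n-f\|_{L^1(G_\tau)}=0\quad\text{for all }f\in L^1(G_\tau).
\end{equation*}
By the standing hypothesis $\ast=\st$, so $\{u_n\}$ is automatically a $\tau$-sequence approximate identity for $L^1_\tau(G_\tau)$ in the sense introduced just before Theorem \ref{36}. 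Applying Theorem \ref{36} then forces $H$ to be the trivial group, which is what we wanted.

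The only step that requires any thought is verifying that second countability of $H$ and $K$ really passes to $G_\tau$ and gives a \emph{sequential} approximate identity rather than just a net: this is the reason second countability appears in the hypothesis, and it is exactly what is needed to invoke Theorem \ref{36}, whose statement is framed in terms of sequences. Once that is in hand, the rest of the argument is a one-line appeal to Theorem \ref{36}, so no additional computation is needed.
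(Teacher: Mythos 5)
Your proposal is correct and follows essentially the same route as the paper's own proof: both directions treat the trivial-$H$ case as immediate and, for the converse, use second countability of $G_\tau$ to obtain a sequential standard approximate identity which, under the hypothesis $\st=\ast$, becomes a $\tau$-sequence approximate identity, so that Theorem \ref{36} forces $H$ to be trivial. Your extra remark on why second countability yields a sequence rather than a net simply makes explicit a step the paper asserts without comment.
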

\begin{proof}
Clearly if $H$ is the trivial group the $\tau$-convolution coincides with the standard convolution of $L^1(G_\tau)$. Conversely if the $\tau$-convolution coincides with the standard convolution of $L^1(G_\tau)$ for each $f,g\in L^1(G_\tau)$ we have $f\st g=f\ast g$. Since the underlying topological space of $G_\tau$ is second countable, the standard convolution on $L^1(G_\tau)$ possess a sequence approximate identity thus the $\tau$-convolution admits a $\tau$-sequence approximate identity and using Theorem \ref{36} the result holds.
\end{proof}
Let $\mathcal{A}$ be a closed $*$-subalgebra of $L^1_\tau(G_\tau)$. A family $\{u_\alpha\}_{\alpha\in I}$ in $\mathcal{A}$ is called a $\tau$-approximate identity for the $*$-subalgebra $\mathcal{A}$ if for each $f\in \mathcal{A}$ we have
\begin{equation}\label{}
\lim_{\alpha\in I}\|u_\alpha\st f-f\|_{L^1_\tau(G_\tau)}=\lim_{\alpha\in I}\|f\st u_\alpha -f\|_{L^1_\tau(G_\tau)}=0.
\end{equation}

Although according to the Theorem \ref{36}, when $H$ and $K$ are second countable if the $\tau$-convolution on $L^1_\tau(G_\tau)$ admits a $\tau$-approximate identity then automatically $H$ should be the trivial group but in the following theorem we show that in general settings if $\Phi\in L^+_1$ the closed
$*$-subalgebra $\lambda_\Phi\left(L^1(K)\right)$ possesses a special kind of $\tau$-approximate identity.
\begin{theorem}\label{40}
{\it Let $\tau:H\to Aut(K)$ be a continuous homomorphism and $G_\tau=H\ltimes_\tau K$ and also let $\Phi\in L^+_1$.
The closed $*$-subalgebra $\lambda_\Phi\left(L^1(K)\right)$ admits a bounded $\tau$-approximate identity.}
\end{theorem}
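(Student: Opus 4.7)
The strategy is to transport a bounded approximate identity of $L^1(K)$ through the isometric $*$-homomorphism $\lambda_\Phi$ furnished by Theorem \ref{30}. Concretely, I would invoke Proposition 2.42 of \cite{FollH} to fix a bounded approximate identity $\{\psi_\alpha\}_{\alpha\in I}$ for $L^1(K)$, say with $\|\psi_\alpha\|_{L^1(K)}\le M$, and then set $u_\alpha:=\lambda_\Phi(\psi_\alpha)=\Phi(\psi_\alpha)$. Because $\lambda_\Phi$ is an isometry, each $u_\alpha$ lies in $\lambda_\Phi(L^1(K))$ and $\|u_\alpha\|_{L^1(G_\tau)}=\|\psi_\alpha\|_{L^1(K)}\le M$, which delivers the required boundedness of the net.

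For the approximate-identity property, note first that an isometric linear map has closed range, so every $f\in\lambda_\Phi(L^1(K))$ can be written as $f=\Phi(\phi)$ for some $\phi\in L^1(K)$; this allows me to bypass a density/$\varepsilon$-$3$ argument. The key computational input is formula (\ref{30.1}) established in the proof of Theorem \ref{30}, which asserts that for a.e.\ $(h,k)\in G_\tau$
\begin{equation*}
u_\alpha\st f(h,k)=\Phi(\psi_\alpha)\st\Phi(\phi)(h,k)=\psi_\alpha\ast\phi(k)\,\|\Phi_h\|_{L^1(K)},
\end{equation*}
while by the definition (\ref{15}) together with the nonnegativity of $\Phi$,
\begin{equation*}
f(h,k)=\Phi(\phi)(h,k)=\phi(k)\,\|\Phi_h\|_{L^1(K)}.
\end{equation*}

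Subtracting and applying Fubini, together with the identity $\int_H\|\Phi_h\|_{L^1(K)}\delta(h)\,dh=\|\Phi\|_{L^1(G_\tau)}=1$, I would obtain
\begin{equation*}
\|u_\alpha\st f-f\|_{L^1(G_\tau)}=\|\Phi\|_{L^1(G_\tau)}\,\|\psi_\alpha\ast\phi-\phi\|_{L^1(K)}=\|\psi_\alpha\ast\phi-\phi\|_{L^1(K)},
\end{equation*}
which tends to zero since $\{\psi_\alpha\}$ is an approximate identity for $L^1(K)$. The estimate $\|f\st u_\alpha-f\|_{L^1(G_\tau)}\to 0$ follows from the symmetric instance of (\ref{30.1}). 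There is no genuine obstacle here: once (\ref{30.1}) is in hand the problem collapses to a classical fact about $L^1(K)$, and the only point requiring a moment of attention is confirming that the range of $\lambda_\Phi$ exhausts the entire closed $*$-subalgebra $\lambda_\Phi(L^1(K))$, so that the computation above actually covers every element and not merely a dense subset.
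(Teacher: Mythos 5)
Your proposal is correct and follows essentially the same route as the paper: both transport a bounded approximate identity $\{\psi_\alpha\}$ of $L^1(K)$ via $u_\alpha=\lambda_\Phi(\psi_\alpha)$ and reduce $\|u_\alpha\st f-f\|_{L^1(G_\tau)}$ to $\|\psi_\alpha\ast\phi-\phi\|_{L^1(K)}$ using the multiplicativity and isometry of $\lambda_\Phi$ from Theorem \ref{30}. Your explicit pointwise computation with (\ref{30.1}) and $\int_H\|\Phi_h\|_{L^1(K)}\delta(h)\,dh=1$ is just an unpacked version of the paper's one-line identity $\Phi(\psi_\alpha)\st\Phi(\phi)-\Phi(\phi)=\Phi(\psi_\alpha\ast\phi-\phi)$, so no substantive difference.
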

\begin{proof}
Let $\{\psi_\alpha\}_{\alpha\in I}$ be a bounded approximate identity for $L^1(K)$ with $\|\psi_\alpha\|_{L^1(K)}=1$, $\psi_\alpha=\psi_\alpha^*$ and $\psi_\alpha\ge0$ for each $\alpha\in I$. Thus for each $\phi\in L^1(K)$ we have
\begin{equation}\label{40.1}
\lim_{\alpha\in I}\|\psi_\alpha\ast\phi-\phi\|_{L^1(K)}=\lim_{\alpha\in I}\|\phi\ast\psi_\alpha-\phi\|_{L^1(K)}=0.
\end{equation}
Now let $\Phi\in L^+_1$ and also for each $\alpha\in I$ let $u_\alpha(h,k):=\Phi(\psi_\alpha)(h,k)$. Then for each $\alpha\in I$ we have $u_\alpha\in L^1_\tau(G_\tau)$ with $\|u_\alpha\|_{L^1_\tau(G_\tau)}=1$ and also $u_\alpha=u_\alpha^{*_\tau}$. We show that $\{u_\alpha\}_{\alpha\in I}$ is a $\tau$-approximate identity for $\Phi\left(L^1(K)\right)$. Using Theorem \ref{30} and also (\ref{40.1}) for each $f\in \lambda_\Phi\left(L^1(K)\right)$ with $f=\Phi(\phi)$ we have
\begin{align*}
\lim_{\alpha\in I}\|u_\alpha\st f-f\|_{L^1_\tau(G_\tau)}
&=\lim_{\alpha\in I}\|\Phi(\psi_\alpha)\st\Phi(\phi)-\Phi(\psi)\|_{L^1_\tau(G_\tau)}
\\&=\lim_{\alpha\in I}\|\Phi\left(\psi_\alpha\ast\phi-\psi\right)\|_{L^1_\tau(G_\tau)}
\\&=\lim_{\alpha\in I}\|\psi_\alpha\ast\phi-\psi\|_{L^1(K)}=0.
\end{align*}
Similarly we have
$$\lim_{\alpha\in I}\|f\st u_\alpha-f\|_{L^1_\tau(G_\tau)}=0.$$
\end{proof}

\section{{\bf $L^p(G_\tau)$ as a left Banach $L^1_{\tau_l}(G_\tau)$-module}}

The left $\tau$-convolution defined in (\ref{10.l}) can be extended from $L^1(G_\tau)$ to other $L^p(G_\tau)$ spaces with $1\le p\le\infty$. In this section we make $L^p(G_\tau)$ into a left Banach $L^1(G_\tau)$-module. First we define a module action.

Let the left module action $\lst_{(p)}:L^1_{\tau_l}(G_\tau)\times L^p(G_\tau)\to L^p(G_\tau)$ defined via $(f,u)\mapsto f\lst_{(p)}u$ where $f\lst_{(p)}u$ is given by
\begin{equation}\label{51}
f\lst_{(p)}u(h,k):=\int_Hf_t\ast u_h(k)\delta(t)dt.
\end{equation}
The left module action defined in (\ref{51}) for a.e. $(h,k)\in G_\tau$ can be written in the form
\begin{equation}\label{52}
f\st_{(p)}u(h,k)=\widetilde{f}\ast u_h(k).
\end{equation}
Because using Fubini's theorem for a.e. $(h,k)\in G_\tau$ we have
\begin{align*}
f\st_{(p)}u(h,k)
&=\int_Hf_t\ast u_h(k)\delta(t)dt
\\&=\int_H\left(\int_Kf(t,s)u(h,s^{-1}k)ds\right)\delta(h)dh
\\&=\int_K\left(\int_Hf(t,s)\delta(t)dt\right)u(h,s^{-1})kds=\widetilde{f}\ast u_h(k).
\end{align*}
The module action defined in (\ref{51}) is converges and also belongs to $L^p(G_\tau)$. Because using Fubini's theorem and also
Proposition 2.39 of \cite{FollH} we have
\begin{align*}
\|f\lst_{(p)}u\|_{L^p(G_\tau)}^p
&=\int_K\int_H|f\lst_{(p)}u(h,k)|^p\delta(h)dhdk
\\&=\int_K\int_H|\widetilde{f}\ast u_h(k)|^p\delta(h)dhdk
\\&=\int_H\left(\int_K|\widetilde{f}\ast u_h(k)|^pdk\right)\delta(h)dh
\\&=\int_H\|\widetilde{f}\ast u_h\|_{L^1(K)}^p\delta(h)dh
\\&\le\|\widetilde{f}\|_{L^1(K)}^p\int_H\|u_h\|_{L^p(K)}^p\delta(h)dh
=\|f\|_{L^1_{\tau_l}(G_\tau)}^p\|u\|_{L^p(G_\tau)}^p.
\end{align*}

\begin{remark}\label{}
When $p=1$ the left module action defined in (\ref{51}) coincides with the left $\tau$-convolution defined in (\ref{10.l}). Also when $H$ is the trivial group the left module action defined in (\ref{51}) coincides with the left module action defined on $L^p(K)$ via Proposition 2.39 of \cite{FollH}.
\end{remark}


In the next theorem we show that the left module action defined in (\ref{51}) is associative and so makes $L^p(G_\tau)$ into a left Banach $L^1_{\tau_l}(G_\tau)$-module.


\begin{theorem}\label{53}
Let $\tau:H\to Aut(K)$ be a continuous homomorphism and $G_\tau=H\ltimes_\tau K$ and also let $1\le p\le\infty$.  $L^p(G_\tau)$ with respect to the left module action $\st_{(p)}$ defined in (\ref{51}) is a left Banach $L^1_{\tau_l}(G_\tau)$-module.
\end{theorem}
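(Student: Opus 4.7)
The statement to prove is that $L^p(G_\tau)$ equipped with the module action $\lst_{(p)}$ is a left Banach $L^1_{\tau_l}(G_\tau)$-module. The norm estimate $\|f \lst_{(p)} u\|_{L^p(G_\tau)} \le \|f\|_{L^1_{\tau_l}(G_\tau)} \|u\|_{L^p(G_\tau)}$ has already been established in the computation preceding the theorem, so what is left to verify is the two remaining module axioms: bilinearity of the map $(f,u) \mapsto f \lst_{(p)} u$, and the associativity identity
\begin{equation*}
(f \lst g) \lst_{(p)} u = f \lst_{(p)} \bigl(g \lst_{(p)} u\bigr)
\end{equation*}
for all $f, g \in L^1_{\tau_l}(G_\tau)$ and $u \in L^p(G_\tau)$.

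Bilinearity is immediate from the definition (\ref{51}) by linearity of the integral and of the standard convolution on $L^p(K)$, so I will concentrate on the associativity step. The plan is to reduce everything to the pointwise formula (\ref{52}). By (\ref{52}) applied to $(f \lst g) \in L^1_{\tau_l}(G_\tau)$ and $u \in L^p(G_\tau)$, we have for a.e.\ $(h,k) \in G_\tau$
\begin{equation*}
(f \lst g) \lst_{(p)} u(h,k) = \widetilde{f \lst g} \ast u_h(k).
\end{equation*}
Now invoke the key identity $\widetilde{f \lst g} = \widetilde{f} \ast \widetilde{g}$, which was established in the proof of Theorem \ref{31}. This turns the right-hand side into $(\widetilde{f} \ast \widetilde{g}) \ast u_h(k)$.

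On the other side, again by (\ref{52}), for a.e.\ $(h,k) \in G_\tau$
\begin{equation*}
f \lst_{(p)} \bigl(g \lst_{(p)} u\bigr)(h,k) = \widetilde{f} \ast \bigl(g \lst_{(p)} u\bigr)_h(k) = \widetilde{f} \ast \bigl(\widetilde{g} \ast u_h\bigr)(k),
\end{equation*}
where in the last equality I used $\bigl(g \lst_{(p)} u\bigr)_h = \widetilde{g} \ast u_h$, which is precisely (\ref{52}) read as a function of $k$ for fixed $h$. Therefore associativity of the module action on $G_\tau$ reduces to the classical associativity
\begin{equation*}
(\widetilde{f} \ast \widetilde{g}) \ast u_h(k) = \widetilde{f} \ast (\widetilde{g} \ast u_h)(k)
\end{equation*}
for $\widetilde{f}, \widetilde{g} \in L^1(K)$ and $u_h \in L^p(K)$ (the latter holding for a.e.\ $h$ by Fubini applied to $u \in L^p(G_\tau)$), which is the standard left module associativity on the locally compact group $K$ recorded in Proposition 2.39 of \cite{FollH}.

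The main potential obstacle is the case $p = \infty$, since convolution of an $L^1$-function with an $L^\infty$-function is defined pointwise rather than almost everywhere and the norm computation in the excerpt only treated $1 \le p < \infty$; there one has to separately check that $\|f \lst_{(p)} u\|_\infty \le \|\widetilde{f}\|_{L^1(K)} \|u\|_\infty \le \|f\|_{L^1_{\tau_l}(G_\tau)} \|u\|_\infty$ by the elementary bound on $L^1 \ast L^\infty$, and interpret the almost-everywhere identities accordingly. A minor secondary point is ensuring that the two applications of (\ref{52}) are justified for the intermediate element $g \lst_{(p)} u \in L^p(G_\tau)$, but this is already contained in the preceding pointwise discussion in the paper.
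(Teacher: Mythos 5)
Your proof is correct and follows essentially the same route as the paper: it reduces associativity via the pointwise formula (\ref{52}) and the identity $\widetilde{f\lst g}=\widetilde{f}\ast\widetilde{g}$ from Theorem \ref{31} to the classical associativity of convolution of $L^1(K)$ with $L^p(K)$. Your additional remark about the $p=\infty$ case is a sensible refinement that the paper's own proof silently skips, but it does not alter the approach.
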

\begin{proof}
It is sufficient to show that the left module action (\ref{51}) is associative. Let $f,g\in L^1_{\tau_l}(G_\tau)$ and $u\in L^p(G_\tau)$. Using Theorem \ref{31} and also (\ref{52}) for a.e. $(h,k)\in G_\tau$ we have
\begin{align*}
\left(f\lst g\right)\st_{(p)}u(h,k)
&=\widetilde{(f\lst g)}\ast u_h(k)
\\&=(\widetilde{f}\ast\widetilde{g})\ast u_h(k)
\\&=\widetilde{f}\ast(\widetilde{g}\ast u_h)(k)
\\&=\widetilde{f}\ast(g\lst_{(p)} u)_h(k)=f\lst_{(p)}\left(g\lst_{(p)}u\right)(h,k).
\end{align*}
\end{proof}


A sequence $\{f_n\}_{n=1}^\infty$ in $L^1_{\tau_l}(G_\tau)$ is called a left $\tau_l$-sequence approximate identity, if for each $p\ge1$ and $u\in L^p(G_\tau)$ satisfies
\begin{equation}\label{54}
\lim_{n}\|f_n\lst u-u\|_{L^p(G_\tau)}=0.
\end{equation}
In the following theorem we show that when $K$ is a second countable locally compact group the left Banach $L^1_{\tau_l}(G_\tau)$-module $L^p(G_\tau)$ admits a left $\tau_l$-sequence approximate identity.


\begin{theorem}\label{55}
Let $\tau:H\to Aut(K)$ be a continuous homomorphism and $G_\tau=H\ltimes_\tau K$ with $K$ second countable. The Banach algebra $L^1_{\tau_l}(G_\tau)$ possesses a left $\tau_l$-sequence approximate identity.
\end{theorem}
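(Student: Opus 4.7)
The plan is to lift a bounded sequence approximate identity for $L^1(K)$ (which exists because $K$ is second countable) up to $L^1_{\tau_l}(G_\tau)$ via the embedding $\lambda_\Phi$ of Theorem \ref{30}, and then use the explicit formula $f\lst_{(p)}u(h,k)=\widetilde{f}\ast u_h(k)$ from equation (\ref{52}) to reduce the convergence on $G_\tau$ to convergence on $K$ uniformly in the second variable.

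Concretely, I would first fix $\Phi\in L_1^+$ and invoke Proposition 2.42 of \cite{FollH} together with second countability of $K$ to obtain a sequence $\{\psi_n\}_{n=1}^\infty\subset L^1(K)$ with $\|\psi_n\|_{L^1(K)}=1$ which is a bounded approximate identity for $L^1(K)$; by choosing the $\psi_n$ as normalized positive functions supported in a shrinking neighborhood base at $e_K$, the standard argument (translation continuity of the left regular representation on $L^p(K)$) gives $\lim_n\|\psi_n\ast\phi-\phi\|_{L^p(K)}=0$ for every $\phi\in L^p(K)$ and every $1\le p<\infty$. Set $f_n:=\lambda_\Phi(\psi_n)=\Phi(\psi_n)\in L^1_{\tau_l}(G_\tau)$; by the computation in the proof of Theorem \ref{31} we have $\widetilde{f_n}=\psi_n$.

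Next, for $u\in L^p(G_\tau)$, formula (\ref{52}) gives $f_n\lst_{(p)}u(h,k)=\widetilde{f_n}\ast u_h(k)=\psi_n\ast u_h(k)$. Hence
\begin{align*}
\|f_n\lst_{(p)}u-u\|_{L^p(G_\tau)}^p
=\int_H\|\psi_n\ast u_h-u_h\|_{L^p(K)}^p\,\delta(h)\,dh.
\end{align*}
For a.e.\ $h\in H$ we have $u_h\in L^p(K)$ (by Fubini), so the integrand tends to $0$ pointwise. Moreover it is dominated by $2^p\|u_h\|_{L^p(K)}^p$ (using $\|\psi_n\ast u_h\|_{L^p(K)}\le\|u_h\|_{L^p(K)}$), and $h\mapsto\|u_h\|_{L^p(K)}^p\delta(h)$ is integrable on $H$ because its integral equals $\|u\|_{L^p(G_\tau)}^p$. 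The dominated convergence theorem then yields $\|f_n\lst u-u\|_{L^p(G_\tau)}\to0$, so $\{f_n\}$ satisfies (\ref{54}) for every $p\ge1$ simultaneously, and one has a left $\tau_l$-sequence approximate identity.

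The only delicate point is the uniform-in-$p$ construction: the same sequence $\{f_n\}$ must work for all $p\ge1$, which is why the underlying $\{\psi_n\}$ has to be chosen so that $\psi_n\ast\phi\to\phi$ in $L^p(K)$ for \emph{every} $p\in[1,\infty)$ and every $\phi\in L^p(K)$. This is standard once one uses an approximate identity whose supports shrink to $e_K$, because then the convergence reduces to the continuity of $y\mapsto L_y\phi$ in $L^p(K)$; second countability is exactly what guarantees that such a countable shrinking base, hence a genuine sequence rather than a net, exists.
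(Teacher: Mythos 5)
Your proposal is correct and follows essentially the same route as the paper: lift a bounded approximate identity $\{\psi_n\}$ of $L^1(K)$ to $f_n=\Phi(\psi_n)$ with $\widetilde{f_n}=\psi_n$, use $f_n\lst_{(p)}u(h,k)=\psi_n\ast u_h(k)$ to reduce the $L^p(G_\tau)$-norm to $\int_H\|\psi_n\ast u_h-u_h\|_{L^p(K)}^p\delta(h)\,dh$, and conclude by dominated convergence. Your version is in fact slightly more careful than the paper's, since you exhibit the dominating function $2^p\|u_h\|_{L^p(K)}^p\delta(h)$ explicitly, which the paper leaves implicit.
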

\begin{proof}
Let $\{\psi_n\}_{n=1}^\infty$ be an approximate identity for $L^1(K)$ according to Proposition 2.42 of \cite{FollH}. Thus for each $p\ge1$ and also $\varphi\in L^p(K)$ we have
\begin{equation}\label{55.1}
\lim_{n}\|\psi_n\ast\varphi-\varphi\|_{L^p(G_\tau)}=0.
\end{equation}
Let $\Phi\in L_1^+$ and for all $n$ put $f_n(h,k):=\Phi(\psi_n)(h,k)$ for a.e. $(h,k)\in G_\tau$. We show that $\{f_n\}_{n=1}^\infty$ a left $\tau_l$-sequence approximate identity. Let $p\ge 1$ and also $u\in L^p(G_\tau)$. Using (\ref{51}), (\ref{55.1}) and also Theorem \ref{31} and The Dominated Convergence Theorem
we achieve
\begin{align*}
\lim_{n}\|f_n\lst u-u\|_{L^p(G_\tau)}^p
&=\lim_{n}\int_H\left(\int_K|f_n\lst u(h,k)-u(h,k)|^pdk\right)\delta(h)dh
\\&=\lim_{n}\int_H\left(\int_K|\Phi(\psi_n)\lst u(h,k)-u(h,k)|^pdk\right)\delta(h)dh
\\&=\lim_{n}\int_H\left(\int_K|\widetilde{\Phi(\psi_n)}\ast u_h(k)-u_h(k)|^pdk\right)\delta(h)dh
\\&=\lim_{n}\int_H\left(\int_K|\psi_n\ast u_h(k)-u_h(k)|^pdk\right)\delta(h)dh
\\&=\lim_{n}\int_H\left(\|\psi_n\ast u_h-u_h\|_{L^p(K)}^p\right)\delta(h)dh
\\&=\int_H\left(\lim_n\|\psi_n\ast u_h-u_h\|_{L^p(K)}^p\right)\delta(h)dh=0.
\end{align*}
\end{proof}


As an immediate consequence of Theorem \ref{55} we have the following corollary.


\begin{corollary}\label{}
{\it Let $\tau:H\to Aut(K)$ be a continuous homomorphism and $G_\tau=H\ltimes_\tau K$ with $K$ second countable. The Banach algebra $L^1_{\tau_l}(G_\tau)$ possesses a $\tau_l$-sequence approximate identity.}
\end{corollary}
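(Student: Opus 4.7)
The plan is to obtain the corollary as a direct specialization of Theorem \ref{55} to the case $p=1$, with no new ideas required. First I would apply Theorem \ref{55}: since $K$ is second countable, $L^1(K)$ admits a sequence approximate identity $\{\psi_n\}_{n=1}^\infty$, and picking any $\Phi\in L^+_1$ the sequence $f_n:=\Phi(\psi_n)\in L^1_{\tau_l}(G_\tau)$ satisfies
$$\lim_{n}\|f_n\lst_{(p)}u-u\|_{L^p(G_\tau)}=0$$
for every $p\ge1$ and every $u\in L^p(G_\tau)$.

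Next I would invoke the Remark placed immediately after the module-action definition, which records that for $p=1$ the left module action $\lst_{(1)}$ agrees verbatim with the left $\tau$-convolution $\lst$ on $L^1(G_\tau)$. Thus specializing the convergence displayed above to $p=1$ gives
$$\lim_{n}\|f_n\lst u-u\|_{L^1(G_\tau)}=0$$
for every $u\in L^1(G_\tau)=L^1_{\tau_l}(G_\tau)$, which is exactly the definition of $\{f_n\}_{n=1}^\infty$ being a $\tau_l$-sequence approximate identity for the Banach algebra $L^1_{\tau_l}(G_\tau)$.

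There is no genuine obstacle here: the corollary is simply rereading the $p=1$ instance of Theorem \ref{55} through the identification of the module action with the algebra multiplication. The only point warranting mention in the write-up is the consistency check provided by the Remark, which ensures that the sequence extracted from Theorem \ref{55} lives in, and acts as an approximate identity inside, the Banach algebra $L^1_{\tau_l}(G_\tau)$ rather than only on a separate module structure.
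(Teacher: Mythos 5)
Your proposal is correct and is exactly the paper's intended argument: the paper offers no separate proof, presenting the corollary as the $p=1$ specialization of Theorem \ref{55} combined with the observation (recorded in the remark) that $\lst_{(1)}$ is the left $\tau$-convolution on $L^1(G_\tau)$. Note only that the corollary's ``$\tau_l$-sequence approximate identity'' must be read in the one-sided sense of the definition preceding Theorem \ref{55} (convergence of $f_n\lst u$ to $u$), which is precisely what your argument, and Theorem \ref{55} itself, delivers.
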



\bibliographystyle{amsplain}

\begin{thebibliography}{10}

\bibitem{BH} Bloom.W.R. and Heyer. H., \textit{Harmonic Analysis of Probability Measures on Hypergroups},
de Gruyter Studies in Mathematics, 20, Walter de Gruyter (1995).

\bibitem{Dix} Dixmier. J., \textit{$C^{*}$-Algebras}, North-Holland and Publishing company, 1977.

\bibitem{FollH} Folland. G.B, \textit{A course in Abstract Harmonic Analysis}, CRC press, 1995.

\bibitem{FollR} -----------------, \textit{Real analysis modern techniques and their applications}, CRC press, 1995.



\bibitem{HR1} Hewitt. E. and Ross. K.A., \textit{Absrtact Harmonic Analysis}, Vol 1, 1963.

\bibitem{HRZ} Hewitt. E. and Zuckerman. S., \textit{On convolution algebras}, Proc. Intern. Cong. Mathematicians Cambridge, Mass., 1950.
Amer. Math. Soc., Providence, R. I., 1952. Vol. I, p. 455.

\bibitem{JEW} Jewett. R.I., \textit{Spaces with an abstract convolution of measures},
Advances in Math., {\bf 18} (1975), 1-101.


\bibitem{50} Reiter. H. and Stegeman.J.D., \textit{Classical Harmonic Analysis}, 2nd Ed, Oxford University Press, New York, 2000.

\bibitem{Zyg} Zygmund. A., \textit{Trigonomteric series}, 2nd Edition, Cambridg University Press 1959.

\end{thebibliography}

\end{document}